\title[Moment maps and Isoparametric hypersurfaces]{Moment maps and Isoparametric hypersurfaces in spheres --- Hermitian cases}
\date{\today}
\author{Shinobu Fujii}
\address{General Education Division, Oshima College of Maritime Technology, 
1091-1~Komatsu, SuouOshima-cho, Oshima-gun, Yamaguchi, 742-2193, JAPAN}
\email{shinofu@oshima-k.ac.jp}
\author{Hiroshi Tamaru}
\address{Department of Mathematics, Hiroshima University, 1-3-1~Kagamiyama, 
Higashi-Hiroshima, 739-8526, JAPAN}
\email{tamaru@math.sci.hiroshima-u.ac.jp}
\subjclass[2010]{Primary~53C40, Secondary~53D20}
\keywords{Isoparametric hypersurfaces; Cartan-M\"unzner polynomials; Hermitian symmetric spaces; moment maps}
\numberwithin{equation}{section}
\def\g{\mathfrak{g}}
\def\k{\mathfrak{k}}
\def\p{\mathfrak{p}}
\def\a{\mathfrak{a}}
\def\h{\mathfrak{h}}
\def\bR{\mathbb{R}}
\def\0{\mathbf{0}}
\DeclareMathOperator{\grad}{grad}
\DeclareMathOperator{\SO}{SO}
\DeclareMathOperator{\U}{U}
\DeclareMathOperator{\Spin}{Spin}
\DeclareMathOperator{\gu}{\mathfrak{u}}
\DeclareMathOperator{\SU}{SU}
\DeclareMathOperator{\Sp}{Sp}
\DeclareMathOperator{\ad}{ad}
\DeclareMathOperator{\Tr}{Tr}
\DeclareMathOperator{\rank}{rank}
\DeclareMathOperator{\id}{id}
\providecommand{\ideal}[1]{\left\langle #1 \right\rangle}
\providecommand{\Norm}[1]{\left\lVert #1 \right\rVert}
\newtheorem{Thm}{Theorem}[section]
\newtheorem{Lem}[Thm]{Lemma}
\newtheorem{Prop}[Thm]{Proposition}
\newtheorem*{MainThm}{Main Theorem}
\theoremstyle{definition}
\newtheorem{Dfn}[Thm]{Definition}
\theoremstyle{remark}
\newtheorem{Rmk}[Thm]{Remark}
\begin{document}
\begin{abstract}
We are studying a relationship between isoparametric hypersurfaces in spheres with four distinct principal curvatures and the moment maps of certain Hamiltonian actions. 
In this paper, we consider the isoparametric hypersurfaces obtained from the isotropy representations of compact irreducible Hermitian symmetric spaces of rank two. 
We prove that the Cartan-M\"unzner polynomials of these hypersurfaces can be written as squared-norms of the moment maps for some Hamiltonian actions. 
The proof is based on the structure theory of symmetric spaces. 
\end{abstract}

\maketitle

%
%
\section{Introduction}

A hypersurface $N$ in a Riemannian manifold $(M, \ideal{\ ,\ })$ is called an \textit{isoparametric hypersurface} if $N$ is a level set of an isoparametric function on $M$.
Here, a smooth function $\varphi$ on $M$ is said to be \textit{isoparametric} if there exist two smooth functions $A(t)$ and $B(t)$ on $\bR$ which satisfy 
\begin{equation}
\left\{\begin{tabular}{c}
$\Norm{\grad \varphi}^2 = \ideal{\grad \varphi, \grad \varphi} = A\circ \varphi$,\\[5pt]
\hspace*{-22mm}$\Delta \varphi = B\circ \varphi$.
\end{tabular}\right.
\end{equation}
We refer to Cecil \cite{Cecil}, Thorbergsson \cite{Thorbergsson}, and the references therein, for history and general theory of isoparametric hypersurfaces. 
Note that isoparametric hypersurfaces in the Euclidean spaces $\mathbb{R}^n$ and the hyperbolic spaces $\mathbb{H}^n$ have been classified completely. 

In this paper, we consider isoparametric hypersurfaces in the spheres $S^n$. 
It is known that a hypersurface in $S^n$ is isoparametric if and only if it has constant principal curvatures. 
Hence, a homogeneous hypersurface in $S^n$ is isoparametric. 
Homogeneous hypersurfaces in spheres have been classified completely by Hsiang and Lawson \cite{Hsiang--Lawson}, and can be characterized as principal orbits of the isotropy representations of symmetric spaces of rank two. 
Let $g$ denote the number of distinct principal curvatures of an isoparametric hypersurface in $S^n$. 
Then, it is known by M\"unzner \cite{Munzner2} that
\begin{align}
g \in \{ 1 , 2 , 3 , 4 , 6 \} . 
\end{align}
In the case of $g \neq 4$, isoparametric hypersurfaces in $S^n$ have been classified completely (for $g \leq 3$ case by E.~Cartan, and for $g=6$ case by Dorfmeister and Neher \cite{DN}, and Miyaoka \cite{Miyaoka6}). 
In fact, all of them are homogeneous for $g \neq 4$. 
In contrast, there exist infinitely many non-homogeneous isoparametric hypersurfaces in the case of $g = 4$. 
Such examples have been constructed by Ozeki and Takeuchi \cite{Ozeki--Takeuchi1}, and Ferus, Karcher and M\"unzner \cite{FKM}. 
On the classification problem of isoparametric hypersurfaces in spheres with $g=4$, very big progress has recently been made by Cecil, Chi and Jensen \cite{CCJ}, Immervoll \cite{Immervoll} and Chi \cite{Chi2008}, \cite{Chi2011b}, \cite{Chi2011a}, but a complete classification seems to be still open. 

In the study of isoparametric hypersurfaces in spheres, Cartan-M\"unzner polynomials play important roles. 
We here briefly recall some results by M\"unzner \cite{Munzner, Munzner2}. 
Let $M$ be an isoparametric hypersurface in $S^n$ with $g$ distinct principal curvatures. 
We denote its principal curvatures by $\lambda_1 < \dots < \lambda_g$, and their multiplicities by $m_1,\dots, m_g$, respectively. 
Then, $M$ is a level set of $\varphi |_{S^n}$, where $\varphi : \bR^{n + 1} \to \bR$ is a homogeneous polynomial function of degree $g$ which satisfies 
\begin{equation}\label{eq:Munzner}
\left\{\begin{tabular}{l}
$\Norm{\grad \varphi(P)}^2 = g^2\Norm{P}^{2 g - 2}$, \\[5pt]
\hspace*{11.25mm}$\Delta \varphi(P) = \dfrac{m_2 - m_1}{2}g^2\Norm{P}^{g - 2}$. 
\end{tabular}\right.
\end{equation}
A homogeneous polynomial satisfying \eqref{eq:Munzner} is called a \textit{Cartan-M\"unzner polynomial} of degree $g$. 
Conversely, if $\varphi : \bR^{n + 1} \to \bR$ is a Cartan-M\"unzner polynomial of degree $g$, then every regular level set of $\varphi |_{S^n}$ is an isoparametric hypersurface in $S^n$ with $g$ distinct principal curvatures. 
Note that $m_1$ and $m_2$ determine all multiplicities, 
from the fact that 
\begin{equation}
m_i = m_{i + 2} \ \ \text{for $i \bmod g$} . 
\end{equation}

We are interested in isoparametric hypersurfaces in spheres with four distinct principal curvatures. 
Our expectation is that every such hypersurface is related to a moment map for a certain Hamiltonian action. 
More precisely, we expect that every Cartan-M\"unzner polynomial of degree four can be described as a squared-norm of a moment map. 
This idea was first proposed by the first author \cite{Fujii}. 
If our expectation is true, then it will give a new view point for the study of isoparametric hypersurfaces in spheres. 
Furthermore, it would potentially be useful for the (re)classification of isoparametric hypersurfaces in spheres with $g=4$. 
We note that Miyaoka \cite{Miyaoka} recently showed that all known Cartan-M\"unzner polynomials of degree four can be expressed in terms of a moment map. 
Thus, her result supports our expectation, but the formulation is different from ours. 

In this paper, we show that our expectation is true for ``Hermitian case'', that is, the homogeneous isoparametric hypersurfaces obtained from the isotropy representations of compact irreducible Hermitian symmetric spaces of rank two. 
To state our main result, we set up some notations. 
Let $G / K$ be a compact irreducible Hermitian symmetric space of arbitrary rank. 
We denote the Lie algebras of $G$ and $K$ by $\g$ and $\k$ respectively. 
Let $\g = \k \oplus \p$ be the Cartan decomposition of $\g$. 
The $K$-action on $\p$ by the adjoint action is called the isotropy representation of $G/K$. 
We define the $K$-invariant inner product $\ideal{\ ,\ }$ on $\g$ by
\begin{equation}
\label{eq:naiseki}
\ideal{X, Y} := -B(X, Y)\ \ \text{for}\ \ X, Y \in \g, 
\end{equation}
where $B$ is the Killing form of $\g$. 
We also denote by $\ideal{\ ,\ }$ its restriction to $\p$ and to $\k$ respectively. 
Since $G / K$ is irreducible and Hermitian, there exists the unique $Z$ (up to sign) in the center $C(\k)$ of $\k$ such that 
\begin{equation}
\label{eq:J}
J := \left. \ad_Z \right|_{\p}
\end{equation}
gives a complex structure on $\p$. 
We can identify $T_P \p$ with $\p$ because $\p$ is a vector space. 
Thus, we can define a canonical symplectic form $\omega$ on $\p$ by
\begin{equation}
\omega_P( X_P, Y_P) := \ideal{J(X_P), Y_P}\ \ \text{for}\ \ X, Y \in \mathfrak{X}(\p)\ \ \text{and}\ \ P \in \p, 
\end{equation}
where $\mathfrak{X}(\p)$ is the set of all vector fields on $\p$. 
Then, $(\p, \omega)$ is a symplectic manifold. 
As we review in Section~\ref{sect:Preliminarily}, the isotropy representation of $G / K$ is a Hamiltonian $K$-action on $(\p, \omega)$. 
We denote its moment map by $\mu : \p \to \k^\ast$. 
Our main result is the following:

\begin{MainThm}
Let $G / K$ be a compact irreducible Hermitian symmetric space of rank two, and $\mu$ be the moment map for the isotropy representation of $G / K$ as above. 
Then, there exists a $K$-invariant norm on $\k^\ast$ such that the squared-norm of $\mu$ is a Cartan-M\"unzner polynomial of degree four. 
\end{MainThm}

Recall that homogeneous hypersurfaces in spheres with $g=4$ are precisely principal orbits of the isotropy representations of the following compact irreducible symmetric spaces: 
\begin{enumerate}
\item $\SO(2 + n) / \SO(2) \times \SO(n)$, 
\item $\SU(2 + n) / \mathrm{S}(\U(2) \times \U(n))$, 
\item $\SO(10) / \U(5)$, 
\item $\mathrm{E}_6 / \U(1) \times \Spin(10)$, 
\item $\Sp(2 + n) / \Sp(2) \times \Sp(n)$, 
\item $\SO(5) \times \SO(5) / \SO(5)$.
\end{enumerate}
Among them, (1), (2), (3) and (4) are Hermitian. 
For classical ones (1), (2) and (3), our main theorem has already been proved by the first author \cite{Fujii}. 
Note that the proofs in \cite{Fujii} use matrices expressions, and the arguments are case-by-case. 
In this paper, we prove our main theorem in terms of the structure theory of symmetric spaces, and hence applicable to both classical cases (1), (2), (3) and exceptional one (4), simultaneously. 
Furthermore, our arguments do not depend on the classification of homogeneous hypersurfaces in spheres. 

This article is organized as follows. 
In Section~\ref{sect:Preliminarily}, we study the function $f_{a,b}$, which is the squared-norm of the moment map $\mu$ 
with respect to the $K$-invariant norm $|| \cdot ||_{a,b}$. 
In Section~\ref{sect:arbitrary rank}, we give formulas for the squared-norm of the gradient and the Laplacian of $f_{a,b}$, where $G/K$ is of arbitrary rank. 
Section~\ref{sect:rank two} simplifies these formulas by assuming that $G/K$ is of rank two. 
In Section~\ref{sect:Main Theorem}, we prove that there exist $a, b \in \bR$ such that $f_{a,b}$ is a Cartan-M\"unzner polynomial of degree four, which concludes the main theorem. 

%
%
\section{Preliminarily}
\label{sect:Preliminarily}

Let $G/K$ be a compact irreducible Hermitian symmetric space. 
In this section, we define and study the functions $f_{a, b}(P)$, 
the squared-norms of the moment map $\mu$ with respect to certain invariant norms $|| \cdot ||_{a,b}$. 

First of all, we recall the formula for the moment map $\mu$. 
Note that $\ideal{\ ,\ }$ is the $K$-invariant inner product on $\k$ defined in \eqref{eq:naiseki}, and $Z$ is an element in the center $C(\k)$ of $\k$ which defines a complex structure on $\p$. 

\begin{Prop}[Fujii {\cite[Proposition 2.2]{Fujii}}, see also \cite{Ohnita}]
\label{Prop:2.1}
The isotropy representation of $G/K$ is a Hamiltonian $K$-action on $(\p, \omega)$. 
Under the identification $\k^\ast \cong \k$ by $\ideal{\ ,\ }$, the moment map $\mu : \p \to \k$ is given by 
\begin{equation}
\mu(P) = \dfrac{1}{2}[P, [P, Z]] \ \ \ \text{for} \ \ P \in \p . 
\end{equation}
\end{Prop}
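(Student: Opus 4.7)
The plan is to verify the two defining properties of a moment map for the $K$-action on $(\p,\omega)$: the defining equation $d\mu^X = \iota_{X^{\ast}}\omega$ for each $X \in \k$ (where $\mu^X(P) := \ideal{\mu(P),X}$ and $X^{\ast}$ is the fundamental vector field), and $K$-equivariance. The fundamental vector field for the isotropy representation is $X^{\ast}_P = [X,P]$, so the defining equation unwinds to the identity
\begin{equation*}
d\mu^X_P(Y) = \omega_P([X,P],Y) = \ideal{[Z,[X,P]], Y} \quad \text{for all } P,Y \in \p, \; X \in \k .
\end{equation*}

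First I would differentiate the candidate formula directly. From $\mu^X(P) = \tfrac{1}{2}\ideal{[P,[P,Z]],X}$, the Leibniz rule gives
\begin{equation*}
d\mu^X_P(Y) = \tfrac{1}{2}\ideal{[Y,[P,Z]] + [P,[Y,Z]], X} .
\end{equation*}
The central observation is that $[Y,P] \in [\p,\p] \subset \k$, and $Z$ lies in the center $C(\k)$, so $[Z,[Y,P]] = 0$. Combined with the Jacobi identity $[Y,[P,Z]] + [P,[Z,Y]] + [Z,[Y,P]] = 0$, this collapses the two terms into one: $[P,[Y,Z]] = [Y,[P,Z]]$. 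Hence $d\mu^X_P(Y) = \ideal{[Y,[P,Z]],X}$. Applying the $\ad$-invariance of $\ideal{\ ,\ }$ (i.e.\ the cyclic identity $\ideal{[A,B],C} = \ideal{B,[C,A]}$) once to the left side and, after rewriting $[Z,[X,P]] = [[P,Z],X]$ via the same Jacobi/central argument, to the right side, both quantities reduce to $\ideal{[P,Z],[X,Y]}$. This establishes the moment map equation.

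The $K$-equivariance $\mu(\Ad(k)P) = \Ad(k)\mu(P)$ is almost automatic: substituting into $\tfrac{1}{2}[P,[P,Z]]$ and using $\Ad(k)Z = Z$ (since $Z \in C(\k)$ and $K$ is connected, so $\Ad(K)$ fixes $C(\k)$ pointwise), the formula intertwines $\Ad$ trivially.

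The only genuinely delicate point in this plan is organizing the bracket manipulation so that the Jacobi identity and the centrality of $Z$ are invoked in the right combination. Everything else is mechanical bookkeeping with the invariance $\ideal{[A,B],C} = \ideal{B,[C,A]}$. I do not expect any obstacle, since the hypotheses (Hermitian structure giving $Z \in C(\k)$, Cartan decomposition giving $[\p,\p]\subset \k$) are exactly tailored to make the two Jacobi terms coincide.
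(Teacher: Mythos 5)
Your verification is correct, and all the bracket manipulations check out: centrality of $Z$ kills $[Z,[P,Y]]$ and $[Z,X]$, so the Jacobi identity does collapse $d\mu^X_P(Y)$ to $\ideal{[Y,[P,Z]],X}$ and rewrites $\iota_{X^\ast}\omega$ as $\ideal{[[P,Z],X],Y}$, and both sides equal $\ideal{[P,Z],[X,Y]}$ by $\ad$-invariance of the Killing form. Note, however, that the paper itself offers no proof of this proposition --- it is imported verbatim from \cite[Proposition~2.2]{Fujii} (see also \cite{Ohnita}) --- so there is no in-paper argument to compare against; your direct check of the defining equation $d\mu^X = \iota_{X^\ast}\omega$ plus equivariance is the standard one and is essentially what the cited source does. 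Two small points worth making explicit if you write this up: the overall sign of $\mu$ depends on the sign conventions chosen for the fundamental vector field and for the moment map equation (your choices $X^\ast_P=[X,P]$ and $d\mu^X=\iota_{X^\ast}\omega$ are consistent with the stated formula), and a complete proof of ``Hamiltonian action'' should also record that $K$ preserves $\omega$, which follows from $\Ad(k)Z=Z$ (valid since $K$ is connected, as it is for a Hermitian symmetric space) together with the $\Ad(K)$-invariance of $\ideal{\ ,\ }$.
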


Next, we take $K$-invariant norms on $\k$ and the squared-norms of $\mu$. 
Let $|| \cdot ||$ be the canonical norm on $\g$, that is, 
\begin{equation}
|| X ||^2 := \ideal{X,X} \ \ \ \text{for} \ \ X \in \g . 
\end{equation}
Its restriction to $\k$ is $K$-invariant, but we have more $K$-invariant norms on $\k$. 
Since $G / K$ is Hermitian, one has a decomposition $\k = \gu(1) \oplus \k'$ as a Lie algebra, where $\gu(1) = \bR Z$. 
Let 
\begin{equation}
\pi_1 : \k \to \gu(1) , \quad \pi_2 : \k \to \k' 
\end{equation}
denote the canonical projections of $\k$ onto $\gu(1)$ and $\k'$, respectively. 
Hence, for $a,b \in \bR$, we have a $K$-invariant norm $\Norm{ \, \cdot \, }_{a,b}$ on $\k$ by 
\begin{equation}
\Norm{X}_{a,b}^2 := a \Norm{\pi_1(X)}^2 + b \Norm{\pi_2(X)}^2 
\ \ \text{for} \ \ X \in \k . 
\end{equation}

\begin{Dfn}
The \textit{weighted squared-norm} $f_{a,b} : \p \to \bR$ 
of the moment map $\mu$ is defined by 
\begin{equation} 
f_{a, b}(P) := \Norm{\mu(P)}_{a,b}^2 \ \ \text{for} \ \ P \in \p . 
\end{equation} 
\end{Dfn} 

Note that the weighted squared-norm $f_{a,b}$ has already been introduced by the first author \cite{Fujii}. 
Since $\mu$ is $K$-equivariant and $\Norm{\ \cdot\ }_{a,b}$ is $K$-invariant, one can easily see the following. 

\begin{Prop}
The weighted squared-norm $f_{a, b}$ is a $K$-invariant function on $\p$. 
\end{Prop}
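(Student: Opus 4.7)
The plan is to verify $K$-invariance directly from the definitions by combining two observations: the $K$-equivariance of the moment map $\mu$, and the $\Ad(K)$-invariance of the weighted norm $\Norm{\,\cdot\,}_{a,b}$ on $\k$. Since $f_{a,b}(P) = \Norm{\mu(P)}_{a,b}^2$, once both pieces are in hand the conclusion $f_{a,b}(\Ad(k)P) = f_{a,b}(P)$ for every $k \in K$ is immediate.

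First I would establish $K$-equivariance of $\mu$. Using the explicit formula $\mu(P) = \tfrac{1}{2}[P,[P,Z]]$ from Proposition~\ref{Prop:2.1} together with the fact that $Z$ lies in the center $C(\k)$ (hence $\Ad(k)Z = Z$ for every $k \in K$), one computes
\begin{equation*}
\mu(\Ad(k)P) = \tfrac{1}{2}[\Ad(k)P,[\Ad(k)P,\Ad(k)Z]] = \Ad(k)\,\mu(P).
\end{equation*}
This is the standard $K$-equivariance statement under the identification $\k^\ast \cong \k$ via $\ideal{\ ,\ }$.

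Next I would show that $\Norm{\,\cdot\,}_{a,b}$ is $\Ad(K)$-invariant. The key point is that the decomposition $\k = \gu(1) \oplus \k'$ is $\Ad(K)$-invariant: the summand $\gu(1) = \bR Z$ is fixed pointwise by $\Ad(K)$ because $Z$ is central in $\k$, and its $\ideal{\ ,\ }$-orthogonal complement $\k'$ is therefore $\Ad(K)$-stable because the inner product itself is $\Ad(K)$-invariant. Consequently the projections $\pi_1$ and $\pi_2$ commute with $\Ad(k)$, and since $\Norm{\,\cdot\,}$ is $\Ad(K)$-invariant on each summand, the weighted combination $\Norm{X}_{a,b}^2 = a\Norm{\pi_1(X)}^2 + b\Norm{\pi_2(X)}^2$ inherits the same invariance.

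There is no real obstacle here; the two observations combine formally as $f_{a,b}(\Ad(k)P) = \Norm{\Ad(k)\mu(P)}_{a,b}^2 = \Norm{\mu(P)}_{a,b}^2 = f_{a,b}(P)$. The only point worth stating carefully is the $\Ad(K)$-invariance of the splitting $\k = \gu(1) \oplus \k'$, since this is what makes the weighted norm (as opposed to just the canonical one) invariant; everything else is a direct substitution.
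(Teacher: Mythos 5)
Your proof is correct and follows exactly the same route as the paper, which simply observes that $K$-invariance of $f_{a,b}$ follows from the $K$-equivariance of $\mu$ and the $K$-invariance of $\Norm{\,\cdot\,}_{a,b}$; you have merely supplied the (routine) verifications of those two facts that the paper leaves to the reader.
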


In the remaining of this section, we give a formula for $f_{a,b}(P)$. 
It was obtained in \cite[Sections 2 and 3]{Fujii} that 
\begin{equation}
\label{eq:2.5}
f_{a, b}(P) = b\Norm{\mu(P)}^2 + \dfrac{a - b}{4\Norm{Z}^2}\Norm{P}^4. 
\end{equation}
To rewrite this formula, we calculate $\Norm{Z}^2$. 
Denote by $N := \dim \p$. 

\begin{Lem}\label{Lem:3.1}
We have $\Norm{Z}^2 = N$. 
\end{Lem}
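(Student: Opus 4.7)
The plan is to compute $\|Z\|^2$ directly from the definition of the inner product as $-B$, by evaluating the trace of $\mathrm{ad}_Z^2$ separately on the two summands of the Cartan decomposition $\g = \k \oplus \p$.

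First I would unfold the definition: $\|Z\|^2 = -B(Z,Z) = -\Tr(\ad_Z \circ \ad_Z)$. Since $Z \in \k$ and the Cartan decomposition is $\ad_{\k}$-stable (because $[\k,\k]\subset \k$ and $[\k,\p]\subset \p$), the endomorphism $\ad_Z$ preserves both $\k$ and $\p$, so the trace splits as
\begin{equation}
\Tr(\ad_Z^2) = \Tr\bigl(\ad_Z^2|_{\k}\bigr) + \Tr\bigl(\ad_Z^2|_{\p}\bigr).
\end{equation}

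Next I would handle the two pieces. On $\k$: by hypothesis $Z \in C(\k)$, so $\ad_Z|_{\k}=0$ and the corresponding trace vanishes. On $\p$: by the very definition \eqref{eq:J}, $\ad_Z|_{\p} = J$ is a complex structure, hence $(\ad_Z|_{\p})^2 = J^2 = -\id_{\p}$, and therefore
\begin{equation}
\Tr\bigl(\ad_Z^2|_{\p}\bigr) = -\dim \p = -N.
\end{equation}
Combining these gives $\|Z\|^2 = -(0 + (-N)) = N$, as required.

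There is no real obstacle here; the statement is essentially a bookkeeping computation, and the only subtlety is recognizing that the two defining properties of $Z$ (centrality in $\k$ and the fact that $\ad_Z|_{\p}$ squares to $-\id$) are exactly what make the two partial traces clean. Everything else is a direct application of the definition of the inner product \eqref{eq:naiseki} and the fact that the Cartan decomposition respects $\ad_{\k}$.
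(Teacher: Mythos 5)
Your proposal is correct and follows essentially the same route as the paper: unfold $\|Z\|^2 = -\Tr(\ad_Z^2)$, split the trace over the Cartan decomposition, and use $Z \in C(\k)$ on $\k$ and $(\ad_Z|_{\p})^2 = -\id_{\p}$ on $\p$. No issues.
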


\begin{proof}
By the definition of our inner product, we have
\begin{equation}\label{eq:3.1}
\Norm{Z}^2 = \ideal{Z, Z} = - B(Z,Z) = - \Tr\left( \left( \ad_Z \right)^2 \right).  
\end{equation}
Since $(\ad_Z)^2$ preserves $\k$ and $\p$ respectively, one has 
\begin{equation}
- \Tr \left( \left( \ad_Z \right)^2 \right) 
= - \Tr \left( \left. (\ad_Z)^2 \right|_{\k} \right) 
- \Tr \left( \left. (\ad_Z)^2 \right|_{\p} \right) . 
\end{equation}
Because $Z \in C(\k)$, we find $(\ad_Z)^2 |_{\k} = 0$. 
On the other hand, since $\left.\ad_Z\right|_{\p}$ is a complex structure on $\p$, it follows that $(\ad_Z)^2 |_{\p} = -\id_\p$. 
We thus have 
\begin{equation}\label{eq:3.2}
- \Tr \left( \left( \ad_Z \right)^2 \right) 
= - \Tr \left( -\id_\p \right) = \dim \p = N . 
\end{equation}
This completes the proof. 
\end{proof}

\begin{Prop}
\label{Prop:f-ab}
We have
\begin{equation}\label{eq:3.3}
f_{a, b}(P) = b \Norm{\mu(P)}^2 + \dfrac{a - b}{4 N}\Norm{P}^4. 
\end{equation}
\end{Prop}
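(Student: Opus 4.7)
The statement is essentially a rewriting of formula (2.5), which was already established in \cite[Sections 2 and 3]{Fujii}, with the factor $\Norm{Z}^2$ in the denominator replaced by the value just computed in Lemma~3.1. My plan, therefore, is simply to quote (2.5) and invoke Lemma~3.1; substituting $\Norm{Z}^2 = N$ into the denominator of the second term of (2.5) immediately yields (3.3). There is no substantive step beyond this substitution.

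For completeness, I would indicate briefly how (2.5) itself arises, so that the present proposition can also be read as self-contained. The orthogonal decomposition $\k = \gu(1) \oplus \k'$ with $\gu(1) = \bR Z$ gives, for any $X \in \k$,
\[
\Norm{X}_{a,b}^2 \;=\; a\Norm{\pi_1(X)}^2 + b\Norm{\pi_2(X)}^2 \;=\; b\Norm{X}^2 + (a-b)\Norm{\pi_1(X)}^2,
\]
so it suffices to compute $\Norm{\pi_1(\mu(P))}^2 = \ideal{\mu(P),Z}^2/\Norm{Z}^2$. Using Proposition~2.1 together with the $\ad$-invariance of $\ideal{\ ,\ }$ and $[Z,P] = JP$ (so that $\Norm{[P,Z]}^2 = \Norm{P}^2$), one finds $\ideal{\mu(P),Z} = -\tfrac{1}{2}\Norm{P}^2$, whence $\Norm{\pi_1(\mu(P))}^2 = \Norm{P}^4/(4\Norm{Z}^2)$. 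Plugging this into the previous display produces (2.5).

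The hard part is therefore Lemma~3.1 itself rather than the present proposition; once the value of $\Norm{Z}^2$ is in hand, the proof is a single substitution and no real obstacle is to be anticipated.
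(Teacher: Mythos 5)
Your proof is correct and follows the same route as the paper: the paper's own proof of this proposition is exactly the one-line substitution of Lemma~\ref{Lem:3.1} into \eqref{eq:2.5}. Your supplementary derivation of \eqref{eq:2.5} (via the orthogonal splitting $\k = \bR Z \oplus \k'$ and the computation $\ideal{\mu(P),Z} = -\tfrac{1}{2}\Norm{P}^2$) is also sound, though the paper simply cites \cite{Fujii} for that formula.
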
 

\begin{proof}
It follows easily from \eqref{eq:2.5} and Lemma~\ref{Lem:3.1}. 
\end{proof}

Note that the results stated in this section are independent of the rank of $G / K$, but it is essential that $G/K$ is Hermitian. 

%
%
\section{The gradient and the Laplacian in the case of arbitrary rank}\label{sect:arbitrary rank}

Let $f_{a, b}(P)$ be the weighted squared-norm of the moment map $\mu$. 
In this section, we compute the squared-norm of the gradient and the Laplacian of $f_{a, b}(P)$, 
where the rank of $G/K$ is arbitrary. 
Throughout this section, we denote by $r := \rank G/K$ and $N := \dim \p$. 
We fix an orthonormal basis $\{ P_i \}$ of $\p$. 

%
%

\subsection{The squared-norm of the gradient}

In this subsection, we compute the squared-norm of the gradient of $f_{a, b}(P)$. 
Recall that the gradient is given by 
\begin{equation}
\grad f_{a, b}(P) = \sum \dfrac{\partial f_{a, b}}{\partial P_i} P_i . 
\end{equation}
Recall that $f_{a, b}(P)$ can be written as a linear combination of $\Norm{P}^4$ and $\Norm{\mu(P)}^2$. 
Hence we start from calculating their partial differentials. 
We use the complex structure $J = \ad_Z |_{\p}$ on $\p$ defined in \eqref{eq:J}. 

\begin{Lem}\label{Lem:3.4}
Partial differentials of $\Norm{P}^4$ and $\Norm{\mu(P)}^2$ of order one satisfy 
\begin{enumerate}
\item $(\partial \Norm{P}^4) / (\partial P_i) = 4\Norm{P}^2\ideal{P, P_i}$, 
\item $(\partial \Norm{\mu(P)}^2) / (\partial P_i) = 2\ideal{[\mu(P), J(P)], P_i}$. 
\end{enumerate}
\end{Lem}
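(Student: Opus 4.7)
The plan is to handle the two parts separately, with (1) being a routine calculation and (2) requiring a short manipulation using the Jacobi identity.

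For part (1), I would write $P = \sum_j \ideal{P, P_j} P_j$, so that $\Norm{P}^2 = \sum_j \ideal{P,P_j}^2$ and $\Norm{P}^4 = \bigl(\sum_j \ideal{P,P_j}^2\bigr)^2$. Differentiating with respect to the $i$-th coordinate gives immediately $4 \Norm{P}^2 \ideal{P, P_i}$.

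For part (2), the main step is to compute the differential $d\mu(P)(V)$ of the quadratic map $\mu(P) = \frac{1}{2}[P, [P, Z]]$ and then apply the chain rule to $\Norm{\mu(P)}^2 = \ideal{\mu(P), \mu(P)}$. By Leibniz,
\begin{equation*}
d\mu(P)(V) = \tfrac{1}{2}\bigl([V, [P, Z]] + [P, [V, Z]]\bigr) .
\end{equation*}
The key observation is that the two summands are equal: by the Jacobi identity,
\begin{equation*}
[V, [P, Z]] - [P, [V, Z]] = [[V, P], Z],
\end{equation*}
and since $V, P \in \p$ we have $[V, P] \in \k$, so $[[V, P], Z] = 0$ because $Z \in C(\k)$. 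Therefore $d\mu(P)(V) = [V, [P, Z]] = -[V, J(P)] = [J(P), V]$.

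Finally, combining the chain rule with the $\ad$-invariance of $\ideal{\ ,\ }$ (which follows from the invariance of the Killing form), one obtains
\begin{equation*}
\frac{\partial \Norm{\mu(P)}^2}{\partial P_i} = 2\ideal{\mu(P), d\mu(P)(P_i)} = 2\ideal{\mu(P), [J(P), P_i]} = 2\ideal{[\mu(P), J(P)], P_i},
\end{equation*}
which is the desired formula. The only non-routine step is the Jacobi-plus-centrality simplification of $d\mu$; everything else is bookkeeping.
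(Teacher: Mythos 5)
Your proposal is correct and follows essentially the same route as the paper: part (1) is the same routine coordinate computation, and for part (2) the paper likewise differentiates $\mu(P)=\tfrac{1}{2}[P,[P,Z]]$ by Leibniz and uses the Jacobi identity together with $[V,P]\in\k$ and $Z\in C(\k)$ to identify the two summands, obtaining $d\mu(P)(V)=[J(P),V]$ before applying $\ad$-invariance of the inner product.
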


\begin{proof}
Recall that, for each $P_i$, the partial differential of a function $f(P)$ of order one is given by
\begin{equation}\label{eq:3.18}
\dfrac{\partial f(P)}{\partial P_i} = \lim_{t \to 0} \dfrac{f(P + t P_i) - f(P)}{t} . 
\end{equation}

We show (1). 
First of all, it is easy to see that 
\begin{equation}\label{eq:3.19}
\dfrac{\partial P}{\partial P_i} = \lim_{t \to 0} \dfrac{(P + t P_i) - P}{t} = P_i . 
\end{equation}
One can also see that 
\begin{equation}\label{eq:3.20}
\dfrac{\partial \Norm{P}^2}{\partial P_i} 
= \dfrac{\partial \ideal{P, P}}{\partial P_i} 
= 2 \ideal{P, \dfrac{\partial P}{\partial P_i}} 
= 2 \ideal{P, P_i} . 
\end{equation}
Therefore, we conclude (1) by 
\begin{equation}\label{eq:3.21}
\dfrac{\partial \Norm{P}^4}{\partial P_i} 
= \dfrac{\partial (\Norm{P}^2)^2}{\partial P_i} 
= 2\Norm{P}^2\dfrac{\partial \Norm{P}^2}{\partial P_i} 
= 4\Norm{P}^2\ideal{P, P_i}.  
\end{equation}

Next, we prove (2). 
Since $\mu(P) = (1/2) [P,[P,Z]]$ by Proposition~\ref{Prop:2.1}, we obtain
\begin{equation}
\dfrac{\partial \mu(P)}{\partial P_i} 
= \dfrac{1}{2} ([\dfrac{\partial P}{\partial P_i}, [P, Z]] + [P, [\dfrac{\partial P}{\partial P_i}, Z]]) 
= \dfrac{1}{2} 
([P_i, [P, Z]] + [P, [P_i, Z]]) . 
\end{equation}
It follows from the Jacobi identity and $Z \in C(\k)$ that
\begin{equation}
[P, [P_i, Z]] 
= - [P_i, [Z, P]] - [Z , [P, P_i]] 
= [P_i, [P, Z]] 
= [J(P), P_i] . 
\end{equation}
This concludes
\begin{equation}\label{eq:3.23}
\dfrac{\partial \mu(P)}{\partial P_i} 
= [J(P), P_i] . 
\end{equation}
Thus, we get
\begin{equation}\label{eq:3.25}
\dfrac{\partial \Norm{\mu(P)}^2}{\partial P_i} 
= 2\ideal{\mu(P), \dfrac{\partial \mu(P)}{\partial P_i}} 
= 2\ideal{\mu(P), [J(P), P_i]} . 
\end{equation}
Since $\ad_{J(P)}$ is skew-symmetric, this completes the proof of (2). 
\end{proof}

From the above calculations of differentials of order one, we can compute the gradients of $\Norm{P}^4$ and $\Norm{\mu(P)}^2$. 

\begin{Lem}
\label{lem:gradient}
We have 
\begin{enumerate}
\item 
$\grad \Norm{P}^4 = 4 \Norm{P}^2 P$, 
\item 
$\grad \Norm{\mu(P)}^2 = 2 [\mu(P), J(P)]$. 
\end{enumerate}
\end{Lem}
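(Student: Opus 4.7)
The plan is to invoke Lemma~\ref{Lem:3.4} together with the reconstruction identity $X = \sum_i \ideal{X, P_i} P_i$, valid for any $X \in \p$ since $\{P_i\}$ is an orthonormal basis. By definition, $\grad f(P) = \sum_i (\partial f / \partial P_i)(P)\, P_i$, so the gradient of each of the two functions is obtained by collecting the partial derivatives already computed in Lemma~\ref{Lem:3.4}.

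For (1), the partial derivative $4\Norm{P}^2 \ideal{P, P_i}$ allows one to factor out $4\Norm{P}^2$, and the remaining sum $\sum_i \ideal{P, P_i} P_i$ is exactly $P$ by the reconstruction identity. This gives (1) immediately.

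For (2), the same device applied to $2\ideal{[\mu(P), J(P)], P_i}$ yields $2[\mu(P), J(P)]$, provided that $[\mu(P), J(P)]$ lies in $\p$ so that the reconstruction identity applies to it. This is the only step requiring a moment's thought, and it is settled by noting that $\mu(P) \in \k$ (Proposition~\ref{Prop:2.1}), $J(P) \in \p$ (since $J$ is a complex structure on $\p$), and $[\k, \p] \subseteq \p$ (the standard bracket relation of the Cartan decomposition). There is no genuine obstacle here; the lemma is a direct corollary of Lemma~\ref{Lem:3.4} combined with the $\p$-stability of $[\k, \p]$.
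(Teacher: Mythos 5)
Your proposal is correct and matches the paper's own proof: both use the expansion $\grad f = \sum_i (\partial f/\partial P_i) P_i$ together with the reconstruction identity for the orthonormal basis, and both isolate the same key point that $[\mu(P), J(P)] \in \p$ (from $\mu(P)\in\k$, $J(P)\in\p$, and $[\k,\p]\subseteq\p$) so that the sum in (2) reproduces the bracket.
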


\begin{proof}
The claim (1) follows easily from Lemma \ref{Lem:3.4} (1), since 
\begin{equation}\label{eq:3.29}
\grad \Norm{P}^4 
= \sum_{i = 1}^{N}\dfrac{\partial \Norm{P}^4}{\partial P_i}P_i 
= \sum_{i = 1}^{N}4 \Norm{P}^2\ideal{P, P_i}P_i 
= 4 \Norm{P}^2 P .
\end{equation}
We show (2). 
From Lemma \ref{Lem:3.4} (2), it follows that
\begin{equation}\label{eq:3.28}
\grad \Norm{\mu(P)}^2 
= \sum_{i = 1}^{N} \dfrac{\partial \Norm{\mu(P)}^2}{\partial P_i}P_i 
= 2\sum_{i = 1}^{N} \ideal{ [\mu(P), J(P)], P_i }P_i . 
\end{equation}
Note that $[\mu(P), J(P)] \in \p$, because $\mu(P) \in \k$ and $J(P) \in \p$. 
Hence, this concludes the proof of (2). 
\end{proof}

\begin{Rmk}
Although we computed $\grad \Norm{\mu(P)}^2$ directly, it is well-known in symplectic geometry (we refer to Kirwan \cite[Lemma 6.6]{Kirwan}). 
\end{Rmk}

Now it is easy to compute the gradient of $f_{a, b}(P)$. 

\begin{Lem}\label{Lem:3.5}
We have
\begin{equation}\label{eq:3.26}
\grad f_{a, b}(P) = 2 b [\mu(P), J(P)] + \dfrac{(a - b)\Norm{P}^2}{N}P. 
\end{equation}
\end{Lem}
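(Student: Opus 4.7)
The plan is very short because essentially all the work has already been done. Proposition~\ref{Prop:f-ab} gives the clean decomposition
\[
f_{a,b}(P) \;=\; b\,\Norm{\mu(P)}^2 \;+\; \frac{a-b}{4N}\,\Norm{P}^4,
\]
so the gradient of $f_{a,b}$ is, by linearity of $\grad$, simply the same linear combination of $\grad \Norm{\mu(P)}^2$ and $\grad \Norm{P}^4$. I would therefore begin by writing this out and then invoking Lemma~\ref{lem:gradient} to substitute the two known gradients.

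Next, I would plug in $\grad \Norm{\mu(P)}^2 = 2[\mu(P), J(P)]$ and $\grad \Norm{P}^4 = 4\Norm{P}^2 P$. The coefficient of the first term is $b$, producing $2b[\mu(P),J(P)]$, and the coefficient of the second term is $(a-b)/(4N)$, producing $((a-b)\Norm{P}^2/N)\,P$ after cancelling the factor of $4$. This yields exactly the claimed formula.

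There is no real obstacle here: the only thing to be careful about is that both summands land in $\p$ (so that the right-hand side is genuinely a gradient vector in $\p$), which is the observation already noted at the end of the proof of Lemma~\ref{lem:gradient}, namely $[\mu(P),J(P)] \in \p$ because $\mu(P)\in\k$ and $J(P)\in\p$, while $P\in\p$ trivially. Hence the proof reduces to one or two lines of substitution, with no new ingredient beyond Proposition~\ref{Prop:f-ab} and Lemma~\ref{lem:gradient}.
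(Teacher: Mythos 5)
Your proposal is correct and follows exactly the paper's own argument: apply $\grad$ to the decomposition of Proposition~\ref{Prop:f-ab} by linearity and substitute the two gradients from Lemma~\ref{lem:gradient}. Nothing is missing; the extra remark that both summands lie in $\p$ is already covered in the proof of Lemma~\ref{lem:gradient}.
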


\begin{proof}
We use the expression of $f_{a, b}(P)$ given in Proposition \ref{Prop:f-ab}. 
By linearity of $\grad$, we get 
\begin{equation}\label{eq:3.27}
\grad f_{a, b}(P) = b\grad \Norm{\mu(P)}^2 + \dfrac{a - b}{4 N}\grad \Norm{P}^4. 
\end{equation}
Therefore, Lemma \ref{lem:gradient} easily concludes the proof. 
\end{proof}

We are now in position to compute the squared-norm of the gradient of $f_{a, b}(P)$. 

\begin{Prop}\label{Prop:3.7}
We have
\begin{equation}\label{eq:3.30}
\Norm{\grad f_{a, b}(P)}^2 = 4 b^2 \Norm{[J(P), \mu(P)]}^2 + \dfrac{8 b (a - b)}{N}\Norm{P}^2\Norm{\mu(P)}^2 + \dfrac{(a - b)^2}{N^2}\Norm{P}^6.  
\end{equation}
\end{Prop}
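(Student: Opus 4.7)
The plan is to square the expression for $\grad f_{a,b}(P)$ given by Lemma~\ref{Lem:3.5} and identify each of the three resulting terms with the corresponding term in the claimed formula. Writing
\[
\grad f_{a,b}(P) = 2b[\mu(P), J(P)] + \tfrac{(a-b)\Norm{P}^2}{N} P,
\]
the squared norm expands into a diagonal part $4b^2 \Norm{[\mu(P), J(P)]}^2$, a cross term proportional to $\ideal{[\mu(P), J(P)], P}$, and a pure part $\tfrac{(a-b)^2}{N^2}\Norm{P}^6$. The first and third already match the statement (after using antisymmetry of the bracket to rewrite $\Norm{[\mu(P), J(P)]}^2 = \Norm{[J(P), \mu(P)]}^2$), so the only real content is the cross term.

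To identify the cross term, I would first use the $\ad$-invariance of $\ideal{\ ,\ }$ (which is just the associativity of the Killing form on $\g$) to move the bracket:
\[
\ideal{[\mu(P), J(P)], P} = \ideal{\mu(P), [J(P), P]}.
\]
The key observation is then that $[J(P), P]$ is essentially $\mu(P)$ itself. Indeed, from Proposition~\ref{Prop:2.1} and the definition $J = \ad_Z|_{\p}$, one has
\[
\mu(P) = \tfrac{1}{2}[P, [P, Z]] = -\tfrac{1}{2}[P, J(P)] = \tfrac{1}{2}[J(P), P],
\]
hence $[J(P), P] = 2\mu(P)$. Substituting, the cross term becomes $\ideal{\mu(P), 2\mu(P)} = 2\Norm{\mu(P)}^2$, and multiplying by the prefactor $2 \cdot 2b \cdot \tfrac{(a-b)\Norm{P}^2}{N}$ yields precisely $\tfrac{8b(a-b)}{N}\Norm{P}^2\Norm{\mu(P)}^2$.

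There is no real obstacle in this proof: it is a direct computation once Lemma~\ref{Lem:3.5} is in hand. The only point that requires a small amount of care is the identity $[J(P), P] = 2\mu(P)$, which both produces the factor of $2$ in the cross term and reveals why the mixed term can be written as $\Norm{P}^2\Norm{\mu(P)}^2$ rather than as a more complicated inner product. Everything else is simply expansion of a square and collection of terms.
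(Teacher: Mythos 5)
Your proposal is correct and follows essentially the same route as the paper: expand the square of the gradient from Lemma~\ref{Lem:3.5} and reduce the cross term via the invariance of the inner product and the identity $[J(P),P]=2\mu(P)$, exactly as in the paper's computation \eqref{eq:3.4}--\eqref{eq:3.33}. No gaps.
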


\begin{proof}
One knows $\grad f_{a,b}(P)$ from Lemma~\ref{Lem:3.5}. 
We thus get
\begin{equation}\label{eq:3.31}
\begin{split}
\Norm{\grad f_{a, b}(P)}^2 
&= 4 b^2 \Norm{[\mu(P), J(P)]}^2 + \dfrac{4 b (a - b)\Norm{P}^2}{N}\ideal{[\mu(P), J(P)], P}\\
&\hspace{4cm}+ \dfrac{(a - b)^2\Norm{P}^4}{N^2}\Norm{P}^2.
\end{split}
\end{equation}
Therefore, we have only to calculate $\ideal{[\mu(P), J(P)], P}$. 
One can easily see that 
\begin{equation}\label{eq:3.4}
[J(P), P] = [[Z, P], P] = [P, [P, Z]] = 2\mu(P). 
\end{equation}
This concludes that 
\begin{equation}\label{eq:3.33}
\ideal{[\mu(P), J(P)], P} 
= \ideal{\mu(P), [J(P), P]} 
= \ideal{\mu(P), 2 \mu(P)} = 2 \Norm{\mu(P)}^2 , 
\end{equation}
which completes the proof.
\end{proof}

%
%

\subsection{Preliminaries on root systems}
\label{subsection:root}

To calculate the Laplacian of $f_{a,b}(P)$, we need some properties of root systems. 
In this subsection, we recall the root systems of symmetric spaces of compact type $G/K$, not necessarily Hermitian. 
We refer to \cite{Helgason, Loos} for details. 

Let $\g = \k \oplus \p$ be the Cartan decomposition. 
Take a maximal abelian subspace $\a$ in $\p$. 
For $\alpha \in \a^\ast$, where $\a^\ast$ is the dual vector space of $\a$, let us define 
\begin{equation}
\g_{\alpha} := \{ 
X \in \g \mid (\ad_H)^2(X) = - \alpha(H)^2 X \ \ (\forall H \in \a) 
\} . 
\end{equation}
Note that $\g_{\alpha} = \g_{-\alpha}$ by definition. 
We call $\alpha \in \a^\ast$ a \textit{root} if it satisfies $\alpha \neq 0$ and $\g_{\alpha} \neq 0$.  
The set of roots, denoted by $\Delta$, is called the \textit{root system}. 
For each $\alpha \in \Delta \cup \{ 0 \}$, put 
\begin{equation}
\k_{\alpha} := \k \cap \g_{\alpha} , \quad 
\p_{\alpha} := \p \cap \g_{\alpha} . 
\end{equation}
These $\g_{\alpha}$, $\k_{\alpha}$ and $\p_{\alpha}$ are called the \textit{root spaces}. 
The following properties of root spaces are well-known (we refer to \cite[Chapter~VI, Proposition~1.4]{Loos}, 
\cite[Chapter~VII, Lemmas~11.3, 11.4]{Helgason}). 

\begin{Prop}
\label{Prop:Loos}
The root spaces satisfy the following. 
\begin{enumerate}
\item
We have the following decompositions$:$ 
\begin{equation}\label{eq:3.12}
\k = \k_0 \oplus \bigoplus_{\alpha} \k_{\alpha}, \quad 
\p = \a \oplus \bigoplus_{\alpha} \p_{\alpha} . 
\end{equation}
\item
For all $\alpha \in \Delta$, one has $\dim \k_{\alpha} = \dim \p_{\alpha}$. 
\item
For all $\alpha, \beta \in \Delta \cup \{ 0 \}$, one has 
\begin{equation}
\begin{split} 
[\k_{\alpha} , \k_{\beta}] & \subset \k_{\alpha + \beta} + \k_{\alpha - \beta} , \\ 
[\k_{\alpha} , \p_{\beta}] & \subset \p_{\alpha + \beta} + \p_{\alpha - \beta} , \\
[\p_{\alpha} , \p_{\beta}] & \subset \k_{\alpha + \beta} + \k_{\alpha - \beta} . 
\end{split} 
\end{equation}
\end{enumerate}
\end{Prop}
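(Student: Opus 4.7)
The plan is to derive all three claims from one device: the simultaneous diagonalization of the commuting family $\{\ad_H \mid H \in \a\}$ of skew-symmetric operators on the real inner-product space $(\g, -B)$. Compactness of $G$ makes $-B$ positive definite, so each $\ad_H$ is skew-symmetric; the family commutes because $\a$ is abelian. A commuting family of real skew-symmetric operators admits a common orthogonal decomposition into its kernel plus two-dimensional invariant planes on which each $\ad_H$ rotates with angular speed linear in $H$. Grouping planes of equal speed defines $\g_\alpha$ for each $\alpha \in \a^\ast$ (up to sign, matching $\g_\alpha = \g_{-\alpha}$), while the kernel is $\g_0$, giving the raw decomposition $\g = \g_0 \oplus \bigoplus_\alpha \g_\alpha$.

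I would then refine this by the Cartan involution $\theta$. Since $\theta(H) = -H$ for $H \in \a \subset \p$, one has $\theta \circ \ad_H = -\ad_H \circ \theta$, so $\theta$ commutes with $(\ad_H)^2$ and preserves each $\g_\alpha$. Decomposing by the $\pm 1$-eigenspaces of $\theta$ yields $\g_\alpha = \k_\alpha \oplus \p_\alpha$ for every $\alpha \in \Delta$. For $\alpha = 0$, skew-symmetry forces $\ker(\ad_H)^2 = \ker \ad_H$, so $\g_0$ is the centralizer of $\a$ in $\g$, and $\p \cap \g_0 = \a$ by the maximality of $\a$; this produces the summand $\a$ in the decomposition of $\p$, completing~(1). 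For~(2) I would choose $H \in \a$ with $\alpha(H) \neq 0$: on $\g_\alpha$ the relation $(\ad_H)^2 = -\alpha(H)^2 \id$ makes $\ad_H$ invertible, and because $H \in \p$ the parities $[\p,\k] \subset \p$, $[\p,\p] \subset \k$ force $\ad_H$ to exchange $\k_\alpha$ and $\p_\alpha$, whence $\dim \k_\alpha = \dim \p_\alpha$.

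For~(3) I would pass to the complexification $\g^\bC$. The operators $-i\ad_H$ are commuting and Hermitian there, and simultaneously diagonalize into joint eigenspaces $\g^\bC_{(\alpha)} := \{X \in \g^\bC \mid \ad_H X = i\alpha(H) X \ (\forall H \in \a)\}$; the Jacobi identity gives $[\g^\bC_{(\alpha)}, \g^\bC_{(\beta)}] \subset \g^\bC_{(\alpha + \beta)}$. The real root space satisfies $\g_\alpha \otimes \bC = \g^\bC_{(\alpha)} \oplus \g^\bC_{(-\alpha)}$. Writing $X \in \g_\alpha$ as $X^+ + X^-$ with $X^\pm \in \g^\bC_{(\pm \alpha)}$, and $Y \in \g_\beta$ similarly, the expansion of $[X,Y]$ produces four terms lying in $\g^\bC_{(\pm \alpha \pm \beta)}$; regrouping the $(+,+)$ with $(-,-)$ terms and the $(+,-)$ with $(-,+)$ terms, and intersecting back with $\g$, places $[X,Y]$ inside $\g_{\alpha + \beta} + \g_{\alpha - \beta}$. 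Combining this with the $\k$--$\p$ parities then refines the inclusion into the three variants stated in~(3).

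The hard part will be the last step: one must track how the complex eigenspaces pair under complex conjugation so that real brackets land back in the real form $\g$, and must handle the degenerate cases $\alpha \pm \beta = 0$, where two of the four eigenspaces coincide and the target $\g_{\alpha + \beta} + \g_{\alpha - \beta}$ degenerates into a Cartan-like piece rather than a sum of two distinct root spaces. Once this bookkeeping is carried out, the three inclusions fall out uniformly.
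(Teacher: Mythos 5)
Your proof is correct, but note that the paper does not actually prove Proposition~\ref{Prop:Loos}: it is quoted as a known fact with references to Loos and Helgason, so there is no in-paper argument to compare against. What you have written is essentially the standard proof from those sources: simultaneous spectral reduction of the commuting skew-symmetric family $\{\ad_H \mid H \in \a\}$ on $(\g,-B)$, refinement by the Cartan involution $\theta$ (which commutes with each $(\ad_H)^2$ because $\theta\circ\ad_H = -\ad_H\circ\theta$), the intertwining trick $\ad_H\colon \k_\alpha \leftrightarrow \p_\alpha$ for part~(2), and the complexified joint eigenspaces $\g^{\bC}_{(\alpha)}$ for part~(3). The two items you defer as ``bookkeeping'' do close, and it is worth recording why: (i) since each $\ad_H$ is a real operator, conjugation relative to $\g$ carries $\g^{\bC}_{(\alpha)}$ onto $\g^{\bC}_{(-\alpha)}$, so for real $X = X^+ + X^-$ one has $X^- = \overline{X^+}$; hence $[X^+,Y^+]+[X^-,Y^-]$ and $[X^+,Y^-]+[X^-,Y^+]$ are each \emph{separately} real and lie in $\g\cap\g^{\bC}_{\alpha+\beta} = \g_{\alpha+\beta}$ and in $\g_{\alpha-\beta}$ respectively, so no further intersection argument is needed; (ii) the degenerate cases $\alpha = \pm\beta$ are absorbed by the conventions $\k_0 = \k\cap\g_0$ and $\p_0 = \a$ (the latter you justify via $\ker(\ad_H)^2 = \ker\ad_H$ for skew-symmetric operators plus maximality of $\a$), and by reading $\g_\gamma = 0$ whenever $\gamma \neq 0$ is not a root, which is exactly what the defining equation gives. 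The only sentence I would sharpen is the claim that a commuting skew-symmetric family decomposes into planes on which every $\ad_H$ rotates ``with speed linear in $H$'': this is cleanest obtained by first passing to $\g^{\bC}$, diagonalizing the commuting anti-Hermitian operators there, and descending, which is the route you in fact use for part~(3) anyway.
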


The decompositions given in \eqref{eq:3.12} are called the \textit{root space decompositions with respect to $\a$}. 
To be exact, the indices $\alpha$ run through $\Delta^+$, say a set of positive roots, since $\g_{\alpha} = \g_{- \alpha}$. 

\begin{Dfn}
The \textit{multiplicity} $m_\alpha$ of a root $\alpha$ is defined by 
\begin{equation}
m_{\alpha} := \dim \k_{\alpha} = \dim \p_{\alpha} . 
\end{equation}
\end{Dfn}

By using the above properties of root spaces, one can get the following lemma. 
We use this to compute the Laplacian of $f_{a, b}(P)$, and also in the latter section. 

\begin{Lem}\label{Lem:3.3}
For all $P \in \p$, we have
\begin{equation}\label{eq:3.7} 
\Norm{P}^2 = -2 \Tr\left( \left.(\ad_P)^2\right|_\p \right) . 
\end{equation}
\end{Lem}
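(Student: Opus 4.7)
The plan is to reduce to the case $P\in\a$ and exploit the root space decomposition from Proposition~\ref{Prop:Loos}. By the definition of our inner product, $\Norm{P}^2=-B(P,P)=-\Tr((\ad_P)^2)$, where the trace is taken over all of $\g$. Since both sides of the desired identity are $\Ad(K)$-invariant polynomial functions on $\p$ (the inner product is $K$-invariant, and $\Ad(k)\circ\ad_P\circ\Ad(k^{-1})=\ad_{\Ad(k)P}$ preserves $\k$ and $\p$ respectively), and since every element of $\p$ is $\Ad(K)$-conjugate to an element of the maximal abelian subspace $\a$, it suffices to establish the identity for $P\in\a$.

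Once $P\in\a$, I would split the trace over $\g=\k\oplus\p$ and then split each piece further using the root space decompositions
\begin{equation*}
\k=\k_0\oplus\bigoplus_{\alpha\in\Delta^+}\k_{\alpha},\qquad
\p=\a\oplus\bigoplus_{\alpha\in\Delta^+}\p_{\alpha}.
\end{equation*}
On $\a$ the operator $(\ad_P)^2$ vanishes since $\a$ is abelian, and on $\k_0$ it vanishes by the very definition of $\g_0$. On each root space $\k_\alpha$ respectively $\p_\alpha$, the definition of $\g_\alpha$ gives $(\ad_P)^2=-\alpha(P)^2\cdot\id$, so the traces are $-\alpha(P)^2 m_\alpha$ in each case, using $\dim\k_\alpha=\dim\p_\alpha=m_\alpha$ from Proposition~\ref{Prop:Loos}(2).

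Summing these contributions yields
\begin{equation*}
\Tr\bigl((\ad_P)^2|_{\k}\bigr)=\Tr\bigl((\ad_P)^2|_{\p}\bigr)=-\sum_{\alpha\in\Delta^+}\alpha(P)^2\,m_{\alpha},
\end{equation*}
so $\Tr((\ad_P)^2)=2\Tr((\ad_P)^2|_\p)$ and consequently $\Norm{P}^2=-2\Tr((\ad_P)^2|_\p)$ for all $P\in\a$, extending to all $P\in\p$ by $K$-invariance.

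The only subtle point is the reduction step: we must invoke the standard fact that $\Ad(K)\cdot\a=\p$ for symmetric spaces of compact type, so that $K$-invariant functions on $\p$ are determined by their values on $\a$. No genuine obstacle arises—the computation is a direct application of the root-space machinery just recalled, and the sum over positive roots is automatically finite because $\Delta$ is.
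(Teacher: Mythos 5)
Your proof is correct and follows essentially the same route as the paper: decompose the trace over the root space decomposition with respect to a maximal abelian subspace, observe that $(\ad_P)^2$ acts as $-\alpha(P)^2\id$ on $\k_\alpha$ and $\p_\alpha$, and use $\dim\k_\alpha=\dim\p_\alpha$ to equate $\Tr((\ad_P)^2|_\k)$ with $\Tr((\ad_P)^2|_\p)$. The only cosmetic difference is in the reduction: you conjugate $P$ into a fixed $\a$ via $\Ad(K)\cdot\a=\p$ and $K$-invariance, whereas the paper simply chooses a maximal abelian subspace containing the given $P$; both are standard and equally valid.
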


\begin{proof}
We fix an arbitrary $P \in \p$. 
Since $(\ad_P)^2$ preserves $\k$ and $\p$ respectively, we have
\begin{equation}\label{eq:3.10}
\begin{split}
\Norm{P}^2 = -B(P, P) = -\Tr\left( (\ad_P)^2 \right) 
= -\Tr\left( \left.(\ad_P)^2\right|_\k \right) - \Tr\left( \left.(\ad_P)^2\right|_\p \right). 
\end{split}
\end{equation}
Thus, it is sufficient to show
\begin{equation}\label{eq:3.11}
\Tr\left( \left.(\ad_P)^2\right|_\k \right) = \Tr\left( \left.(\ad_P)^2\right|_\p \right). 
\end{equation}
In order to prove \eqref{eq:3.11}, we take a maximal abelian subspace $\a$ in $\p$ containing $P$, and the root space decomposition with respect to $\a$. 
Note that, since $P \in \a = \p_0$, Proposition~\ref{Prop:Loos} (3) yields that $(\ad_P)^2$ preserves each root space. 
Hence we obtain 
\begin{align}
\Tr\left( \left. \left( \ad_P \right)^2 \right|_{\k} \right) &= \Tr\left( \left. \left( \ad_P \right)^2 \right|_{\k_0} \right) + \sum_{\alpha} \Tr\left( \left. \left( \ad_P \right)^2 \right|_{\k_\alpha} \right), \label{eq:2.13}\\
\Tr\left( \left. \left( \ad_P \right)^2 \right|_{\p} \right) &= \Tr\left( \left. \left( \ad_P \right)^2 \right|_{\a} \right) + \sum_{\alpha} \Tr\left( \left. \left( \ad_P \right)^2 \right|_{\p_\alpha} \right). \label{eq:2.14}
\end{align}
By definition of the root spaces, one has 
\begin{equation}
\begin{split}
&\left. \left( \ad_P \right)^2 \right|_{\k_0} = 0,\qquad \left. \left( \ad_P \right)^2 \right|_{\a} = 0, \\ 
&\left. \left( \ad_P \right)^2 \right|_{\k_\alpha} = -\alpha(P)^2 \id_{\k_\alpha},\qquad \left. \left( \ad_P \right)^2 \right|_{\p_\alpha} = -\alpha(P)^2 \id_{\p_\alpha}. 
\end{split}
\end{equation}
Therefore, it follows from Proposition~\ref{Prop:Loos} (2) that
\begin{equation}\label{eq:3.15}
\begin{split}
\Tr\left( \left. \left( \ad_P \right)^2 \right|_{\k_0} \right) 
&= \Tr\left( \left. \left( \ad_P \right)^2 \right|_{\a} \right) = 0,\\ 
\Tr\left( \left. \left( \ad_P \right)^2 \right|_{\k_\alpha} \right) 
&= -\alpha(P)^2 \dim \k_\alpha 
= -\alpha(P)^2 \dim \p_\alpha 
= \Tr\left( \left. \left( \ad_P \right)^2 \right|_{\p_\alpha} \right). 
\end{split}
\end{equation}
By substituting \eqref{eq:3.15} for \eqref{eq:2.13} and \eqref{eq:2.14}, we obtain \eqref{eq:3.11}. 
Thus we complete the proof. 
\end{proof}

%
%

\subsection{The Laplacian}

In this subsection, we compute the Laplacian of $f_{a, b}(P)$. 
Recall that the Laplacian is given by
\begin{equation}
\Delta f_{a, b}(P) = \sum \dfrac{\partial^2 f_{a, b}(P)}{\partial {P_i}^2} . 
\end{equation}
Recall that $f_{a, b}(P)$ can be written as a linear combination 
of $\Norm{P}^4$ and $\Norm{\mu(P)}^2$. 
Hence we start from calculating their partial differentials of order two. 

\begin{Lem}\label{Lem:3.8}
Partial differentials of $\Norm{P}^4$ and $\Norm{\mu(P)}^2$ of order two satisfy 
\begin{enumerate}
\item $(\partial^2 \Norm{P}^4) / (\partial {P_i}^2) = 8\ideal{P, P_i}^2 + 4\Norm{P}^2$, 
\item $(\partial^2 \Norm{\mu(P)}^2) / (\partial {P_i}^2) = 2\ideal{\ad_{\mu(P)}\circ\ad_Z(P_i), P_i} - 2\ideal{ \left( \ad_{J(P)} \right)^2 (P_i), P_i}$. 
\end{enumerate}
\end{Lem}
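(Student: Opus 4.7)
The approach will be a direct differentiation, using the first-order formulas already established in Lemma~\ref{Lem:3.4} together with the product rule. Throughout, I would use the fact that, since $\{P_i\}$ is a fixed orthonormal basis, $\partial P/\partial P_i = P_i$ and each $P_i$ itself is independent of the variable $P$, so in particular $\partial\ideal{P,P_i}/\partial P_i = \ideal{P_i,P_i} = 1$. I would also use that, because $\ideal{\,,\,}$ is the $\Ad$-invariant inner product built from the Killing form, every $\ad_X$ with $X \in \g$ is skew-symmetric with respect to $\ideal{\,,\,}$.

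For (1), the plan is to differentiate the expression from Lemma~\ref{Lem:3.4}(1),
\begin{equation*}
\frac{\partial \Norm{P}^4}{\partial P_i} = 4\Norm{P}^2\ideal{P,P_i},
\end{equation*}
once more with respect to $P_i$. The product rule together with $\partial \Norm{P}^2/\partial P_i = 2\ideal{P,P_i}$ and $\partial\ideal{P,P_i}/\partial P_i = 1$ immediately yields the claim.

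For (2), I would start from the identity $\Norm{\mu(P)}^2 = \ideal{\mu(P),\mu(P)}$ and use the formula $\partial\mu(P)/\partial P_i = [J(P),P_i]$ obtained in \eqref{eq:3.23}. Differentiating once more, and using that $J = \ad_Z|_{\p}$ is linear in its argument so that $\partial [J(P),P_i]/\partial P_i = [J(P_i),P_i]$, gives
\begin{equation*}
\frac{\partial^2 \Norm{\mu(P)}^2}{\partial P_i^2} = 2\ideal{[J(P_i),P_i],\mu(P)} + 2\Norm{[J(P),P_i]}^2.
\end{equation*}
The remaining step is to recast these two terms in the form demanded by the statement. For the first term, moving $\ad_{J(P_i)}$ to the right via skew-symmetry produces $\ideal{P_i,[\mu(P),J(P_i)]}$, and then rewriting $J(P_i) = \ad_Z(P_i)$ identifies this with $\ideal{\ad_{\mu(P)}\circ\ad_Z(P_i),P_i}$. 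For the second term, $\Norm{[J(P),P_i]}^2 = \ideal{\ad_{J(P)}(P_i),\ad_{J(P)}(P_i)}$, and a single use of the skew-symmetry of $\ad_{J(P)}$ converts this into $-\ideal{(\ad_{J(P)})^2(P_i),P_i}$. Collecting these two pieces gives exactly the asserted formula.

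No real obstacle is expected; the only mildly delicate point is keeping track of which slot $\ad_Z$ ends up acting in after the algebraic manipulations, so that the final expression matches the statement rather than a symmetrised version of it. The proof is entirely mechanical after Lemma~\ref{Lem:3.4} and the skew-symmetry of $\ad$.
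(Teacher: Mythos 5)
Your proposal is correct and follows essentially the same route as the paper: differentiate the first-order formulas once more via the product rule and then tidy up with the skew-symmetry of $\ad$. The only cosmetic difference is that you differentiate $2\ideal{\mu(P),[J(P),P_i]}$ rather than the rearranged form $2\ideal{[\mu(P),J(P)],P_i}$ used in the paper, which merely shifts one application of skew-symmetry to the end of the computation.
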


\begin{proof}
First, we prove (1). 
One knows the first differentials from Lemma \ref{Lem:3.4} (1). 
Hence we have 
\begin{equation}\label{eq:3.35}
\begin{split}
\dfrac{\partial^2 \Norm{P}^4}{\partial {P_i}^2} 
= \dfrac{\partial }{\partial P_i}4\Norm{P}^2\ideal{P, P_i} 
= 4\left( \left( \dfrac{\partial \Norm{P}^2}{\partial P_i} \right) \ideal{P, P_i} + \Norm{P}^2\dfrac{\partial \ideal{P, P_i}}{\partial P_i} \right). 
\end{split}
\end{equation}
From \eqref{eq:3.20} and \eqref{eq:3.19}, one has 
\begin{equation}\label{eq:3.36}
\dfrac{\partial \Norm{P}^2}{\partial P_i} = 2 \ideal{P, P_i} , \quad 
\dfrac{\partial \ideal{P, P_i}}{\partial P_i} 
= \ideal{\dfrac{\partial P}{\partial P_i} , P_i}
= \ideal{P_i, P_i} = 1 . 
\end{equation}
Our first claim (1) has been shown by substituting \eqref{eq:3.36} for \eqref{eq:3.35}. 

Next, we show (2). 
We use the first differentials calculated in Lemma \ref{Lem:3.4} (2). 
By direct computations, one has 
\begin{equation}\label{eq:3.38}
\begin{split}
\dfrac{\partial^2 \Norm{\mu(P)}^2}{\partial {P_i}^2} 
&= \dfrac{\partial }{\partial P_i}\left( 2\ideal{[\mu(P), J(P)], P_i} \right) \\ 
&= 
2\ideal{[\mu(P), \dfrac{\partial J(P)}{\partial P_i}], P_i} 
+ 2\ideal{[\dfrac{\partial \mu(P)}{\partial P_i}, J(P)], P_i} . 
\end{split}
\end{equation}
It follows from \eqref{eq:3.23} that 
\begin{equation}\label{eq:3.39}
\begin{split}
\dfrac{\partial J(P)}{\partial P_i} 
= \dfrac{\partial [Z, P]}{\partial P_i} 
= [Z, \dfrac{\partial P}{\partial P_i}] 
= [Z, P_i] , \quad 
\dfrac{\partial \mu(P)}{\partial P_i} = [J(P), P_i] . 
\end{split}
\end{equation}
By substituting \eqref{eq:3.39} for \eqref{eq:3.38}, one concludes the proof of (2). 
\end{proof}

From the above calculations of differentials of order two, we can compute the Laplacians of $\Norm{P}^4$ and $\Norm{\mu(P)}^2$. 

\begin{Lem}
\label{lem:laplacian}
We have 
\begin{enumerate}
\item 
$\Delta \Norm{P}^4 = 4(N + 2)\Norm{P}^2$, 
\item 
$\Delta \Norm{\mu(P)}^2 = 2\Norm{P}^2$. 
\end{enumerate}
\end{Lem}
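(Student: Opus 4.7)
The plan is to sum the per-coordinate formulas from Lemma~\ref{Lem:3.8} over the orthonormal basis $\{P_i\}$ and identify the resulting sums using Lemma~\ref{Lem:3.3} together with standard invariance arguments. Part~(1) is immediate: summing $(\partial^2 \Norm{P}^4)/(\partial {P_i}^2) = 8\ideal{P, P_i}^2 + 4\Norm{P}^2$ over $i = 1, \ldots, N$ and using orthonormality of $\{P_i\}$ gives $8\Norm{P}^2 + 4N\Norm{P}^2 = 4(N+2)\Norm{P}^2$.

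For part~(2), summing Lemma~\ref{Lem:3.8}(2) expresses the Laplacian as
\begin{equation*}
\Delta \Norm{\mu(P)}^2 = 2\Tr(\ad_{\mu(P)} \circ \ad_Z|_\p) - 2\Tr((\ad_{J(P)})^2|_\p).
\end{equation*}
The second trace falls to Lemma~\ref{Lem:3.3} applied to $J(P) \in \p$, combined with the identity $\Norm{J(P)}^2 = -\ideal{J^2(P), P} = \Norm{P}^2$ coming from skew-adjointness of $\ad_Z$. This piece contributes $\Norm{P}^2$ to the Laplacian.

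The main obstacle is evaluating the remaining trace $\Tr(\ad_{\mu(P)} \circ \ad_Z|_\p)$. My plan is to apply $\ad$-invariance of $\ideal{\,,\,}$ to rewrite it as $\ideal{\mu(P), \sum_i [J(P_i), P_i]}$, and then to pin down $\sum_i [J(P_i), P_i]$ as a concrete element of $\k$. This sum is independent of the choice of orthonormal basis and $\Ad(K)$-invariant (since $Z$ is central in $\k$, $J$ commutes with $\Ad(K)$), and therefore lies in the center $C(\k) = \bR Z$. Pairing against $Z$ and invoking Lemma~\ref{Lem:3.1} ($\Norm{Z}^2 = N$) identifies the sum as $-Z$. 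A further application of $\ad$-invariance yields $\ideal{\mu(P), Z} = -\frac{1}{2}\Norm{P}^2$, so this trace also contributes $\Norm{P}^2$, and the two pieces add to $2\Norm{P}^2$ as claimed.
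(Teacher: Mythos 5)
Your proof is correct. Part (1) and the treatment of the second trace $-2\Tr((\ad_{J(P)})^2|_\p)$ via Lemma~\ref{Lem:3.3} coincide with the paper's argument. Where you genuinely diverge is in evaluating $\Tr(\ad_{\mu(P)}\circ\ad_Z|_\p)$: the paper simply notes that $\ad_Z|_\k=0$, so this trace equals $\Tr(\ad_{\mu(P)}\circ\ad_Z)$ over all of $\g$, which is by definition the Killing form $B(\mu(P),Z)=-\ideal{\mu(P),Z}$ --- one line. You instead contract over the orthonormal basis, use $\ad$-invariance to reduce to $\ideal{\mu(P),\sum_i[J(P_i),P_i]}$, and identify $\sum_i[J(P_i),P_i]=-Z$ by observing that this element is basis-independent and $\Ad(K)$-invariant, hence central, and then pairing with $Z$ and using $\Norm{Z}^2=N$. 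This works (the needed facts --- that the fixed set of the connected isotropy action in $\k$ is $C(\k)$, and that $C(\k)=\bR Z$ for an irreducible Hermitian symmetric space --- are standard and implicit in the paper's setup), and it has the merit of exhibiting the geometric meaning of the trace as a pairing with a distinguished central element; but it is longer and relies on irreducibility and connectedness where the paper's Killing-form identity needs neither. Both routes then conclude with the same computation $\ideal{\mu(P),Z}=-\tfrac{1}{2}\Norm{J(P)}^2=-\tfrac{1}{2}\Norm{P}^2$, which the paper packages as the two expressions for $\Norm{J(P)}^2$.
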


\begin{proof}
We prove (1). 
By Lemma \ref{Lem:3.8} (1), we find
\begin{equation}\label{eq:3.51}
\Delta \Norm{P}^4 
= \sum \dfrac{\partial^2 \Norm{P}^4}{\partial {P_i}^2} 
= 8\sum \ideal{P, P_i}^2 + 4 \sum \Norm{P}^2. 
\end{equation}
The first term of \eqref{eq:3.51} satisfies 
\begin{equation}\label{eq:3.52}
8\sum \ideal{P, P_i}^2 = 8\sum \ideal{P, \ideal{P, P_i}P_i} 
= 8\ideal{P, \sum \ideal{P, P_i}P_i} = 8 \Norm{P}^2. 
\end{equation}
The second term of \eqref{eq:3.51} satisfies 
\begin{equation}\label{eq:3.53}
\textstyle 
4 \sum \Norm{P}^2 = 4 \sum_{i = 1}^{N} \Norm{P}^2 = 4 N \Norm{P}^2 . 
\end{equation}
We thus complete the proof of (1). 

We prove (2). 
Lemma \ref{Lem:3.8} (2) yields that 
\begin{equation}\label{eq:3.45}
\begin{split}
\Delta \Norm{\mu(P)}^2 
= \sum_{i = 1}^{N} \dfrac{\partial^2 \Norm{\mu(P)}^2}{\partial {P_i}^2} 
= 2 \Tr \left( \left. \ad_{\mu(P)} \circ \ad_Z \right|_\p \right) 
-2 \Tr \left( \left. \left( \ad_{J(P)} \right)^2 \right|_\p \right) . 
\end{split}
\end{equation}
In order to calculate each term of \eqref{eq:3.45}, we need
\begin{equation}
\begin{split}
\Norm{J(P)}^2 
& = \ideal{[Z, P], [Z, P]} 
= -\ideal{[Z, [Z, P]], P} 
= -\ideal{-P, P} = \Norm{P}^2 , \\ 
\Norm{J(P)}^2 
& = \ideal{[Z, P], [Z, P]} 
= - \ideal{[P, [P, Z]], Z} 
= -2 \ideal{\mu(P), Z} . 
\end{split}
\end{equation}
Thus, since $\ad_Z |_\k = 0$, 
the first term of \eqref{eq:3.45} satisfies 
\begin{equation}
2 \Tr \left( \left. \ad_{\mu(P)} \circ \ad_Z \right|_\p \right) 
= 2 \Tr \left( \ad_{\mu(P)} \circ \ad_Z \right) 
= 2 B(\mu(P) , Z) 
= -2 \ideal{\mu(p) , Z} 
= \Norm{P}^2 . 
\end{equation}
From Lemma \ref{Lem:3.3}, the second term of \eqref{eq:3.45} satisfies 
\begin{equation}\label{eq:3.47}
-2 \Tr \left( \left. \left( \ad_{J(P)} \right)^2 \right|_\p \right) 
= \Norm{J(P)}^2 = \Norm{P}^2 . 
\end{equation}
This concludes the proof of (2). 
\end{proof}

The following proposition gives us a formula for the Laplacian of $f_{a, b}(P)$. 

\begin{Prop}\label{Prop:3.9}
We have
\begin{equation}\label{eq:3.41}
\Delta f_{a, b}(P) = \dfrac{(N + 2) a + (N - 2) b}{N} \Norm{P}^2.
\end{equation}  
\end{Prop}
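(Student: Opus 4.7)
The plan is to combine Proposition~\ref{Prop:f-ab}, which expresses $f_{a,b}$ as a linear combination of $\Norm{\mu(P)}^2$ and $\Norm{P}^4$, with Lemma~\ref{lem:laplacian}, which supplies the Laplacian of each summand. Since the Laplacian is linear, the proof reduces to bookkeeping.

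Concretely, I would first apply linearity to the formula $f_{a,b}(P) = b\Norm{\mu(P)}^2 + \frac{a-b}{4N}\Norm{P}^4$ from Proposition~\ref{Prop:f-ab}, obtaining
\begin{equation*}
\Delta f_{a,b}(P) = b\,\Delta\Norm{\mu(P)}^2 + \frac{a-b}{4N}\,\Delta\Norm{P}^4.
\end{equation*}
Next I would substitute the two Laplacians supplied by Lemma~\ref{lem:laplacian}, namely $\Delta\Norm{\mu(P)}^2 = 2\Norm{P}^2$ and $\Delta\Norm{P}^4 = 4(N+2)\Norm{P}^2$, producing
\begin{equation*}
\Delta f_{a,b}(P) = 2b\Norm{P}^2 + \frac{(a-b)(N+2)}{N}\Norm{P}^2.
\end{equation*}
A single algebraic simplification over the common denominator $N$, using $2bN + (a-b)(N+2) = (N+2)a + (N-2)b$, then yields the claimed identity.

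There is essentially no obstacle here: all substantive work has already been absorbed into the preceding lemmas. In particular, the nontrivial identity $\Delta\Norm{\mu(P)}^2 = 2\Norm{P}^2$ from Lemma~\ref{lem:laplacian}(2) was obtained from the trace identity $\Norm{P}^2 = -2\Tr((\ad_P)^2|_\p)$ of Lemma~\ref{Lem:3.3} together with $\Norm{J(P)}^2 = \Norm{P}^2 = -2\ideal{\mu(P),Z}$, and the computation $\Delta\Norm{P}^4 = 4(N+2)\Norm{P}^2$ in Lemma~\ref{lem:laplacian}(1) used only that $\{P_i\}$ is an orthonormal basis of $\p$ with $\dim\p = N$. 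Thus the present proposition is purely a collation step.
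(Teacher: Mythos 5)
Your proposal is correct and is essentially identical to the paper's own proof: both apply the linearity of the Laplacian to the decomposition of $f_{a,b}$ from Proposition~\ref{Prop:f-ab} and then substitute the two Laplacians computed in Lemma~\ref{lem:laplacian}. The algebraic simplification $2bN + (a-b)(N+2) = (N+2)a + (N-2)b$ is also right, so nothing is missing.
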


\begin{proof}
We use the formula of $f_{a,b}(P)$ given in Proposition \ref{Prop:f-ab}. 
By linearity of Laplacian, we find 
\begin{equation}\label{eq:3.42}
\Delta f_{a, b}(P) = b \Delta \Norm{\mu(P)}^2 + \dfrac{a - b}{4 N} \Delta \Norm{P}^4. 
\end{equation}
Thus, the proof easily follows from Lemma \ref{lem:laplacian}. 
\end{proof}

%
%
\section{The gradient and the Laplacian in the case of rank two}\label{sect:rank two}

In this section, we compute the squared-norm of the gradient and the Laplacian of $f_{a, b}(P)$ in the case of $\rank G / K = 2$. 

\subsection{Preliminaries on root systems for Hermitian case}
\label{subsection:root-hermitian}

In this subsection, we relate the element $Z$ to the root space decomposition. 
In order to do this, we review some properties of the root systems of $G/K$, irreducible Hermitian symmetric spaces of rank $r$. 

As in Subsection~\ref{subsection:root}, we denote a maximal abelian subspace of $\p$ by $\a$, and consider the root system $\Delta$ with respect to $\a$. 
Recall that the root system of type $C_r$ or $BC_r$ can be represented as 
\begin{equation}
\begin{split}
\Delta(C_r) & = 
\left\{ \pm(\varepsilon_i - \varepsilon_j), \pm(\varepsilon_i + \varepsilon_j) \bigm| 1\leq i < j \leq r \right\} \cup \left\{ \pm 2\varepsilon_i \bigm| 1\leq i \leq r\right\}, \\ 
\Delta(BC_r) & = 
\left\{ \pm(\varepsilon_i - \varepsilon_j), \pm(\varepsilon_i + \varepsilon_j) \bigm| 1\leq i < j \leq r \right\} \cup \left\{ \pm\varepsilon_i, \pm 2\varepsilon_i \bigm| 1\leq i \leq r\right\}. 
\end{split}
\end{equation}
Note that $\left\{ \varepsilon_i \bigm| i = 1, 2,\dotsc, r \right\}$ is a basis of the dual space $\a^\ast$ of $\a$. 
The next proposition follows from the classification of compact irreducible Hermitian symmetric spaces (see \cite{Helgason}, or the table in \cite{Tamaru2}). 
A classification-free proof can be found in \cite[Theorem 4.1]{Nagano--Tanaka}. 

\begin{Prop}
Let $G/K$ be a compact irreducible Hermitian symmetric space. 
Then we have 
\begin{enumerate}
\item
$\Delta$ is of type $C_r$ or $BC_r$, 
\item
the multiplicities satisfy 
$m_{2 \varepsilon_i} = 1$ for all $i \in \left\{ 1, 2,\dotsc, r \right\}$. 
\end{enumerate}
\end{Prop}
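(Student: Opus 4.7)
The plan is to deduce both statements from the interplay between the central element $Z \in C(\k)$ (together with the complex structure $J = \ad_Z|_\p$) and the restricted root space decomposition. Although a straightforward proof is obtained by inspecting the list of compact irreducible Hermitian symmetric pairs, I would follow the classification-free route in the spirit of \cite{Nagano--Tanaka}.

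The first step is to complexify and use the decomposition $\p^{\bC} = \p_+ \oplus \p_-$ into the $(\pm i)$-eigenspaces of $\ad_Z$, observing that $\p_\pm$ are abelian subalgebras of $\g^{\bC}$ --- a defining structural feature of Hermitian symmetric pairs. Fix a Cartan subalgebra $\mathfrak{t}$ of $\k$ containing $Z$, so that $\mathfrak{t}^{\bC}$ is a Cartan of $\g^{\bC}$, and split the absolute roots into compact and non-compact types, the latter characterized by $\beta(Z) = \pm i$. The abelian-nilradical condition $[\p_+, \p_+] = 0$ then forces the sum of two non-compact positive roots never to be a root, a strong combinatorial constraint.

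Next I would produce a maximal set of strongly orthogonal non-compact positive roots $\{\gamma_1, \dotsc, \gamma_r\}$ by a Harish-Chandra-type iteration: pick $\gamma_1$, then inductively choose $\gamma_{i+1}$ strongly orthogonal to $\gamma_1, \dotsc, \gamma_i$ among the remaining non-compact positive roots. Applying the corresponding Cayley transforms consecutively yields a new Cartan subalgebra $\h$ of $\g$ such that $\h \cap \p$ is a maximal abelian $\a$; the $\gamma_i$ then transform to the characters $2\varepsilon_i \in \a^\ast$, while the remaining absolute roots restrict to combinations of the form $\pm \varepsilon_i \pm \varepsilon_j$ or $\pm \varepsilon_i$, producing the $C_r$ or $BC_r$ pattern claimed in (1). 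Part (2) follows essentially for free, since each $\p_{2\varepsilon_i}$ arises as the image under the Cayley transform of the one-dimensional absolute root space $\g^{\bC}_{\gamma_i}$.

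The main obstacle is making the iterated Cayley-transform procedure precise and, in particular, ruling out restricted roots outside of the $C_r/BC_r$ patterns (for instance, no $\pm 3\varepsilon_i$, and no $\pm(\varepsilon_i + \varepsilon_j + \varepsilon_k)$). This is exactly where the abelian-nilradical condition $[\p_+, \p_+] = 0$ does the decisive work of forbidding certain additions of non-compact positive roots, and thereby controlling both the allowed root lengths and the multiplicity of the long roots $2\varepsilon_i$. Carrying this out carefully --- and verifying that the procedure terminates after exactly $r = \rank G/K$ steps --- is the delicate part of the argument.
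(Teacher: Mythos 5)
Your outline is sound, but it is worth noting that the paper does not actually prove this proposition at all: it derives it from the classification of compact irreducible Hermitian symmetric spaces (via \cite{Helgason} or the table in \cite{Tamaru2}) and merely points to \cite[Theorem 4.1]{Nagano--Tanaka} for a classification-free argument. What you propose is therefore a genuinely different route --- the classical Harish-Chandra/Moore argument: use the abelianness of $\p_{\pm} \subset \p^{\bC}$ to build a maximal strongly orthogonal set of non-compact positive roots $\gamma_1, \dotsc, \gamma_r$, Cayley-transform to a maximally split Cartan subalgebra, and read off the restricted roots. This is the standard classification-free proof and it does work; what it buys over the paper's citation is a self-contained structural explanation of \emph{why} the long restricted roots $2\varepsilon_i$ have multiplicity one (they come from the single absolute root $\gamma_i$), which is exactly the fact the paper later exploits in Lemma \ref{Lem:k-epsilon} and Proposition \ref{Prop:4.3}.

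Two caveats on completeness. First, you correctly identify that the decisive step is ruling out restricted roots outside the $C_r/BC_r$ pattern and pinning down which absolute roots restrict to which $\pm\varepsilon_i \pm \varepsilon_j$, $\pm\varepsilon_i$, $\pm 2\varepsilon_i$; but as written you defer this (``the delicate part of the argument''), so the proposal is a roadmap rather than a proof --- this step is precisely Moore's restricted-root theorem and is where all the work lies. Second, part (2) is not quite ``for free'': knowing that $\g^{\bC}_{\gamma_i}$ is one-dimensional gives $m_{2\varepsilon_i} = 1$ only after you have shown that $\gamma_i$ is the \emph{only} absolute root whose restriction to $\a$ is $2\varepsilon_i$, i.e.\ that the fibre of the restriction map over $2\varepsilon_i$ is the singleton $\{\gamma_i\}$. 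That statement belongs to the same combinatorial analysis you postponed, so (2) should be presented as an output of that analysis, not as an automatic consequence of the Cayley transform.
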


In the latter arguments, the root spaces $\k_{2 \varepsilon_i}$ play important roles. 
We here recall some of their properties. 

\begin{Lem}
\label{Lem:k-epsilon}
Under the above settings, we have 
\begin{enumerate}
\item
$[\k_0 , \k_{2 \varepsilon_i}] = 0$ for all $i$, 
\item
$[\k_{2 \varepsilon_i} , \k_{2 \varepsilon_j}] = 0$ for all $i, j$, 
\item
$[\k_{2 \varepsilon_i} , \p_{2 \varepsilon_j}] = 0$ for all $i \neq j$. 
\end{enumerate}
\end{Lem}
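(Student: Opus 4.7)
The plan is to reduce all three claims to the general root-space inclusions in Proposition~\ref{Prop:Loos}(3), combined with the structural facts just recalled: $\Delta$ is of type $C_r$ or $BC_r$, and $m_{2\varepsilon_i}=1$.

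First I would dispose of (2) and (3) by a purely combinatorial check on the root system. For $i\neq j$, neither $2\varepsilon_i+2\varepsilon_j=2(\varepsilon_i+\varepsilon_j)$ nor $2\varepsilon_i-2\varepsilon_j=2(\varepsilon_i-\varepsilon_j)$ appears in $\Delta(C_r)$ or $\Delta(BC_r)$, and both are nonzero; hence the corresponding $\k$- and $\p$-root spaces vanish. Applying Proposition~\ref{Prop:Loos}(3) gives
\begin{equation*}
[\k_{2\varepsilon_i},\k_{2\varepsilon_j}]\subset \k_{2\varepsilon_i+2\varepsilon_j}+\k_{2\varepsilon_i-2\varepsilon_j}=0, \qquad [\k_{2\varepsilon_i},\p_{2\varepsilon_j}]\subset \p_{2\varepsilon_i+2\varepsilon_j}+\p_{2\varepsilon_i-2\varepsilon_j}=0,
\end{equation*}
which settles (3) and the $i\neq j$ case of (2). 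The remaining $i=j$ case of (2) is automatic: since $\dim\k_{2\varepsilon_i}=m_{2\varepsilon_i}=1$, any two elements of $\k_{2\varepsilon_i}$ are $\bR$-proportional, so their bracket vanishes.

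The genuinely nontrivial piece is (1), because Proposition~\ref{Prop:Loos}(3) only yields the inclusion $[\k_0,\k_{2\varepsilon_i}]\subset\k_{2\varepsilon_i}$, not triviality. To close the gap I would again exploit $\dim\k_{2\varepsilon_i}=1$: fixing a nonzero $K\in\k_{2\varepsilon_i}$ and an arbitrary $H\in\k_0$, the restriction of $\ad_H$ to the real line $\k_{2\varepsilon_i}$ is multiplication by some scalar $\lambda(H)\in\bR$. Because $\ideal{\ ,\ }=-B$ and $\g$ is of compact type, this inner product is positive-definite and $\ad$-invariant, so $\ad_H$ is skew-symmetric. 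The identity
\begin{equation*}
\lambda(H)\,\Norm{K}^2 = \ideal{[H,K],K} = -\ideal{K,[H,K]} = -\lambda(H)\,\Norm{K}^2
\end{equation*}
then forces $\lambda(H)=0$, proving (1).

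The only real obstacle is part (1): one must rule out a nonzero eigenvalue of $\ad_H|_{\k_{2\varepsilon_i}}$, which the root-space inclusions alone do not provide. The skew-symmetry argument circumvents this, but it depends essentially on two features of the Hermitian hypothesis — the one-dimensionality $m_{2\varepsilon_i}=1$ (so that $\ad_H$ acts as a single scalar) and the compactness of $\g$ (so that the real invariant inner product $\ideal{\ ,\ }$ makes $\ad_H$ skew-symmetric).
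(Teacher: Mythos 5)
Your proof is correct and follows essentially the same route as the paper: parts (2) and (3) via the root-space inclusions of Proposition~\ref{Prop:Loos}(3) together with the fact that $2\varepsilon_i\pm2\varepsilon_j$ are not roots, and part (1) via skew-symmetry of $\ad_{\k_0}$ on the one-dimensional space $\k_{2\varepsilon_i}$. The paper merely states these same ingredients more tersely; your write-up fills in the details of exactly that argument.
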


\begin{proof}
The claim (1) follows from the skew-symmetry of the adjoint action of $\k_0$ on 
$\k_{2 \varepsilon_i}$ and $\dim \k_{2 \varepsilon_i} = 1$. 
We show (2). 
If $i=j$, then again $\dim \k_{2 \varepsilon_i} = 1$ completes the proof. 
If $i \neq j$, then the assertion follows from 
\begin{equation}
[\k_{2 \varepsilon_i} , \k_{2 \varepsilon_j}] 
\subset 
\k_{2 \varepsilon_i + 2 \varepsilon_j} +  \k_{2 \varepsilon_i - 2 \varepsilon_j} 
= 0 . 
\end{equation}
Note that the first inclusion follows from 
the property of root spaces (Proposition~\ref{Prop:Loos} (3)), 
and second equality follows from 
the fact that $2 \varepsilon_i \pm 2 \varepsilon_j$ are not roots. 
The claim (3) can be proved similarly. 
\end{proof}

From the above properties, one can determine a position of the element $Z$. 
The following proposition was given by Song (\cite[Lemma 2.2 (ii)]{Song}), and also can be found in the proof of \cite[Proposition 4.4 ]{Tamaru3}. 

\begin{Prop}[\cite{Song, Tamaru3}]
\label{Prop:4.1}
We have $Z \in \k_0 \oplus \k_{2\varepsilon_1} \oplus\dots\oplus \k_{2\varepsilon_r}$.  
\end{Prop}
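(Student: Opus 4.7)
My plan is to exploit the centrality of $Z$ in $\k$ in two complementary ways: first, by matching root-space components in the identity $[X, Z] = 0$ for $X \in \k_0$, and second, by placing $Z$ inside a concretely constructed Cartan subalgebra of $\k$ that sits inside $\k_0 \oplus \bigoplus_i \k_{2\varepsilon_i}$.

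First I would decompose $Z = Z_0 + \sum_{\alpha \in \Delta^+} Z_\alpha$ with respect to the root-space decomposition of $\k$ in Proposition~\ref{Prop:Loos}(1). For each $X \in \k_0$, the centrality of $Z$ yields $[X, Z] = 0$; since $[\k_0, \k_\alpha] \subset \k_\alpha$ (a special case of Proposition~\ref{Prop:Loos}(3)), matching $\k_\alpha$-components gives $[X, Z_\alpha] = 0$ for every $\alpha \in \Delta^+ \cup \{0\}$ and every $X \in \k_0$. Hence each $Z_\alpha$ lies in the $\ad(\k_0)$-fixed subspace of $\k_\alpha$. Next, I would introduce the abelian subalgebra
\[
\t := \h \oplus \bigoplus_{i=1}^{r} \k_{2\varepsilon_i},
\]
where $\h$ is a Cartan subalgebra of the reductive Lie algebra $\k_0$. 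The fact that $\t$ is abelian follows immediately from Lemma~\ref{Lem:k-epsilon}(1) and (2): $\h \subset \k_0$ commutes with each $\k_{2\varepsilon_i}$, and the one-dimensional spaces $\k_{2\varepsilon_i}$ pairwise commute. The key claim is that $\t$ is in fact a Cartan subalgebra of $\k$; granted this, the center of $\k$ is contained in every Cartan, so $Z \in \t \subset \k_0 \oplus \bigoplus_i \k_{2\varepsilon_i}$, as desired.

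The main obstacle is verifying the maximality of $\t$ in $\k$. By the first stage, this reduces to showing that $\k_\alpha^{\ad(\k_0)} = 0$ whenever $\alpha \in \Delta^+$ is not of the form $2\varepsilon_i$. This requires the structure theory of Hermitian symmetric spaces a bit more substantively: for each such $\alpha$ one must exhibit an element of $\h$ whose adjoint action on $\k_\alpha$ has no zero eigenvalue, or equivalently argue via the rank identity $\rank \k = \rank \k_0 + r$ that the $r$ extra toral directions in a Cartan of $\k$ are spanned precisely by the $\k_{2\varepsilon_i}$. This is the structural input drawn from Song and Tamaru.
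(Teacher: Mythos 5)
Your overall strategy is the same as the paper's --- build the abelian subalgebra $\k'_0 \oplus \k_{2\varepsilon_1} \oplus \dots \oplus \k_{2\varepsilon_r}$ (with $\k'_0$ maximal abelian in $\k_0$), show it is maximal abelian in $\k$, and conclude that $Z \in C(\k)$ must lie in it. But the one step you flag as ``the main obstacle'' is precisely the step you do not carry out, and it is the entire content of the proposition: you offer two possible routes (exhibiting elements of $\h$ acting without kernel on each $\k_\alpha$, or invoking a rank identity) without executing either, deferring instead to ``structural input drawn from Song and Tamaru.'' As written, this is a genuine gap, not a proof. There is also a smaller logical slip: maximality of $\t$ in $\k$ does not reduce to $\k_\alpha^{\ad(\k_0)} = 0$ but to the a priori stronger statement $\k_\alpha^{\ad(\h)} = 0$ for $\alpha \neq 2\varepsilon_i$, since an element commuting with $\t$ need only be fixed by $\ad(\h)$, not by all of $\ad(\k_0)$. (Your first stage would instead support the \emph{weaker} goal of killing the components $Z_\alpha$ directly, bypassing maximality of $\t$ altogether --- but that, too, needs $\k_\alpha^{\ad(\k_0)} = 0$, which you do not establish.)

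The way the paper closes this gap is a dimension count inside $\g$ rather than inside $\k$, and it avoids any eigenvalue analysis on the root spaces $\k_\alpha$. One first observes that $\k'_0 \oplus \a$ is maximal abelian in $\g$ (a standard argument), hence a Cartan subalgebra, so that
\begin{equation*}
\rank \g = \dim \k'_0 + r .
\end{equation*}
On the other hand, Lemma~\ref{Lem:k-epsilon} shows that $\h' := \k'_0 \oplus \k_{2\varepsilon_1} \oplus \dots \oplus \k_{2\varepsilon_r}$ is abelian, and since each $\k_{2\varepsilon_i}$ is one-dimensional ($m_{2\varepsilon_i}=1$), $\dim \h' = \dim \k'_0 + r = \rank \g$. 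In a compact Lie algebra an abelian subalgebra of dimension equal to the rank is automatically maximal abelian, so $\h'$ is a Cartan subalgebra of $\g$ contained in $\k$, hence maximal abelian in $\k$, and therefore contains $C(\k) \ni Z$. This is the missing idea you would need to add; with it, your first stage (matching root-space components of $[X,Z]=0$) becomes unnecessary.
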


\begin{proof}
We here describe a sketch of the proof. 
Let $\k'_0$ be a maximal abelian subalgebra in $\k_0$. 
Then $\k'_0 \oplus \a$ is abelian. 
Furthermore, a standard argument shows that 
$\k'_0 \oplus \a$ is maximal abelian in $\g$. 
Hence it is a Cartan subalgebra. 
This yields that 
\begin{equation}
\rank \g = \dim (\k'_0 \oplus \a) = \dim \k'_0 + r. 
\end{equation}
On the other hand, Lemma \ref{Lem:k-epsilon} yields that
\begin{equation}
\h' := \k'_0 \oplus \k_{2\varepsilon_1} \oplus \dotsb \oplus \k_{2\varepsilon_r} 
\end{equation}
is abelian. 
Recall that $\dim \k_{2\varepsilon_i} = 1$. 
Thus, from dimension reasons, $\h'$ is also a Cartan subalgebra of $\g$, and hence maximal abelian in $\k$. 
Since $Z \in C(\k)$, we conclude $Z \in \h'$, which completes the proof. 
\end{proof}

\subsection{A decomposition of $Z$}

From Proposition~\ref{Prop:4.1}, we have a decomposition
\begin{equation}\label{eq:4.4}
Z = Z_0 + Z_{2\varepsilon_1} +\dots+ Z_{2\varepsilon_r}, 
\end{equation}
where $Z_0 \in \k_0$ and $Z_{2\varepsilon_i} \in \k_{2\varepsilon_i}$. 
In this subsection, we compute the length of each factor $Z_{2\varepsilon_i}$. 
Assume that $\rank G / K = r$ and keep the notations in the previous subsection. 

First of all, we recall root vectors. 
A vector $H_{\alpha} \in \a$ is called the \textit{root vector corresponding to a root $\alpha$} if it holds that
\begin{equation}
\ideal{H_{\alpha}, H} = \alpha(H) \quad (\forall H \in \a). 
\end{equation}
We here mention well-known properties of root vectors of the root system of type $C_r$ or $BC_r$. 
We refer to an explicit description of each root system, for example, in \cite{Helgason, Humphreys, Loos}. 

\begin{Lem}
\label{Lem:root-vector}
Let $G/K$ be a compact irreducible symmetric space with root system of type $C_r$ or $BC_r$. 
Then we have 
\begin{enumerate}
\item
$\Norm{H_{\varepsilon_1}}^2 = \cdots = \Norm{H_{\varepsilon_r}}^2$, 
\item
$\ideal{H_{\varepsilon_i} , H_{\varepsilon_j}} = 0$ if $i \neq j$. 
\end{enumerate}
\end{Lem}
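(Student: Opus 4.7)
The plan is to transfer both claims to $\a^\ast$ and then exploit Weyl group invariance. The inner product $\ideal{\ ,\ }$ identifies $\a$ with $\a^\ast$ via $H \mapsto \ideal{H, \cdot}$, and the defining property of the root vector shows that $H_\alpha$ corresponds to $\alpha$ under this identification. Hence the dual inner product $(\cdot,\cdot)$ on $\a^\ast$ satisfies $\ideal{H_\alpha, H_\beta} = (\alpha, \beta)$, and both claims reduce to showing that $\{\varepsilon_i\}$ is orthogonal and of equal length with respect to $(\cdot,\cdot)$.

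Next I would invoke the Weyl group $W$ of $\Delta$, acting on $\a^\ast$. Since $(\cdot,\cdot)$ comes from the Killing form, it is $W$-invariant. For root systems of type $C_r$ or $BC_r$, $W$ is the hyperoctahedral group of signed permutations of $\{\varepsilon_1, \ldots, \varepsilon_r\}$; in particular $W$ contains the transposition $\varepsilon_i \leftrightarrow \varepsilon_j$, realized by $s_{\varepsilon_i - \varepsilon_j}$, and the sign change $\varepsilon_i \mapsto -\varepsilon_i$ that fixes every other $\varepsilon_k$, realized by $s_{2\varepsilon_i}$ (or by $s_{\varepsilon_i}$ in the $BC_r$ case). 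Applying $W$-invariance, the transposition gives $(\varepsilon_i, \varepsilon_i) = (\varepsilon_j, \varepsilon_j)$, proving (1); the sign change gives $(\varepsilon_i, \varepsilon_j) = -(\varepsilon_i, \varepsilon_j)$ and hence $(\varepsilon_i, \varepsilon_j) = 0$, proving (2).

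The main subtlety is that the reflections $s_\alpha$ are themselves defined using the inner product I am trying to analyze, so the description of how they act on the $\varepsilon_i$ is not literally automatic. I would resolve this by invoking the uniqueness of $W$-invariant positive-definite inner products on the irreducible reflection representation (Schur's lemma) together with the existence of a standard realization of $C_r$/$BC_r$ in $\bR^r$ in which $\{\varepsilon_i\}$ is orthonormal; the Killing form inner product is then proportional to that standard one. A more elementary alternative, bypassing the Weyl group entirely, is root-string analysis: taking $\alpha = 2\varepsilon_i$ and $\beta = \varepsilon_i - \varepsilon_j$, the $\alpha$-string through $\beta$ is $\{-\varepsilon_i - \varepsilon_j,\ \varepsilon_i - \varepsilon_j\}$, so the Cartan integer $p - q = 1$ forces $(\varepsilon_i, \varepsilon_j) = 0$; then with $\alpha = \varepsilon_i - \varepsilon_j$ and $\beta = 2\varepsilon_j$, the string $\{2\varepsilon_j,\ \varepsilon_i + \varepsilon_j,\ 2\varepsilon_i\}$ gives $p - q = -2$, which combined with the orthogonality just established yields $\Norm{H_{\varepsilon_i}}^2 = \Norm{H_{\varepsilon_j}}^2$.
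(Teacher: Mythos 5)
Your proposal is correct, but it is worth noting that the paper does not actually prove this lemma at all: it states the two properties as ``well-known'' and refers the reader to Helgason, Humphreys and Loos for the explicit description of the $C_r$/$BC_r$ root systems. So any comparison is between your argument and a citation. Of the two routes you offer, the root-string argument is the one to keep: it is self-contained, uses only the axiom $p-q = 2(\beta,\alpha)/(\alpha,\alpha)$ for the $\alpha$-string through $\beta$ (valid for restricted root systems, including the non-reduced $BC_r$), and both string computations check out --- the string $\{-\varepsilon_i-\varepsilon_j,\ \varepsilon_i-\varepsilon_j\}$ for $\alpha=2\varepsilon_i$ gives $(\varepsilon_i,\varepsilon_j)=0$ directly, and the string $\{2\varepsilon_j,\ \varepsilon_i+\varepsilon_j,\ 2\varepsilon_i\}$ for $\alpha=\varepsilon_i-\varepsilon_j$ then gives $(\varepsilon_i,\varepsilon_i)=(\varepsilon_j,\varepsilon_j)$. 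The Weyl-group version, as you yourself observe, is circular as literally stated (that $s_{\varepsilon_i-\varepsilon_j}$ transposes $\varepsilon_i$ and $\varepsilon_j$, and that $s_{2\varepsilon_i}$ fixes $\varepsilon_j$ for $j\neq i$, already presupposes the conclusions); your patch via absolute irreducibility of the reflection representation and uniqueness of the invariant form does repair it, but it imports more machinery (irreducibility of the restricted root system from irreducibility of $G/K$, Schur's lemma over $\bR$, and a comparison with a standard realization) than the lemma deserves. Either version suffices for the paper's purposes; the string argument is the more economical replacement for the missing proof.
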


We now study each factor $Z_{2\varepsilon_i}$ of $Z$. 
In particular, we express the lengths of $Z_{2 \varepsilon_i}$ in terms of the lengths of root vectors. 

\begin{Prop}\label{Prop:4.3}
For all $i \in \left\{ 1, 2,\dotsc, r \right\}$, 
we have  
\begin{enumerate}
\item
$( \ad_{Z_{2\varepsilon_i}})^2\left( H_{\varepsilon_i} \right) = -H_{\varepsilon_i}$, 
\item
$4\Norm{H_{\varepsilon_i}}^2\Norm{Z_{2\varepsilon_i}}^2 = 1$. 
\end{enumerate}
\end{Prop}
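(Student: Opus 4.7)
The plan is to exploit $J^2 = -\id_\p$, which reads $[Z,[Z,H_{\varepsilon_i}]] = -H_{\varepsilon_i}$, and to feed in the decomposition $Z = Z_0 + \sum_k Z_{2\varepsilon_k}$ from \eqref{eq:4.4}; almost every cross-term will vanish, and the one surviving term will deliver (1). Once (1) is in hand, part (2) becomes a short Killing-form computation on the three-element configuration $\{ H_{\varepsilon_i}, S, Z_{2\varepsilon_i}\}$, where $S := [Z_{2\varepsilon_i}, H_{\varepsilon_i}]$.

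For (1), I would first observe $[Z_0, H_{\varepsilon_i}] = 0$, because $\k_0$ is the centralizer of $\a$ in $\k$, so only the $Z_{2\varepsilon_k}$-terms matter in $[Z, H_{\varepsilon_i}]$. The key vanishing is that $[Z_{2\varepsilon_k}, H_{\varepsilon_i}] = 0$ for $k\neq i$: Lemma~\ref{Lem:root-vector}~(2) gives $\varepsilon_k(H_{\varepsilon_i}) = \ideal{H_{\varepsilon_k}, H_{\varepsilon_i}} = 0$, so by the root-space definition $(\ad_{H_{\varepsilon_i}})^2 Z_{2\varepsilon_k} = 0$, and since $\ad_{H_{\varepsilon_i}}$ is skew-symmetric for $\ideal{\,,\,}$ this forces $[H_{\varepsilon_i}, Z_{2\varepsilon_k}] = 0$. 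Hence $[Z, H_{\varepsilon_i}] = [Z_{2\varepsilon_i}, H_{\varepsilon_i}] = S \in \p_{2\varepsilon_i}$. Applying $\ad_Z$ once more to $S$, the $Z_0$-contribution vanishes by the Jacobi identity combined with $[Z_0, Z_{2\varepsilon_i}] = 0$ (Lemma~\ref{Lem:k-epsilon}~(1)) and $[Z_0, H_{\varepsilon_i}] = 0$; the $Z_{2\varepsilon_l}$-contributions for $l\neq i$ vanish by Lemma~\ref{Lem:k-epsilon}~(3). Thus $[Z, [Z, H_{\varepsilon_i}]] = -H_{\varepsilon_i}$ collapses precisely to $(\ad_{Z_{2\varepsilon_i}})^2(H_{\varepsilon_i}) = -H_{\varepsilon_i}$.

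For (2), with $S$ as above, invariance of $\ideal{\,,\,}$ together with (1) gives
\begin{equation*}
\Norm{S}^2 = \ideal{[Z_{2\varepsilon_i}, H_{\varepsilon_i}], S} = -\ideal{H_{\varepsilon_i}, [Z_{2\varepsilon_i}, S]} = \Norm{H_{\varepsilon_i}}^2.
\end{equation*}
On the other hand, $(\ad_{H_{\varepsilon_i}})^2 Z_{2\varepsilon_i} = -(2\varepsilon_i(H_{\varepsilon_i}))^2 Z_{2\varepsilon_i} = -4\Norm{H_{\varepsilon_i}}^4 Z_{2\varepsilon_i}$, and rearranging (noting $[H_{\varepsilon_i}, Z_{2\varepsilon_i}] = -S$) yields $[H_{\varepsilon_i}, S] = 4\Norm{H_{\varepsilon_i}}^4 Z_{2\varepsilon_i}$, whose square-norm is $16\Norm{H_{\varepsilon_i}}^8 \Norm{Z_{2\varepsilon_i}}^2$. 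Computing the same square-norm from the other side using skew-symmetry,
\begin{equation*}
\Norm{[H_{\varepsilon_i}, S]}^2 = -\ideal{S, (\ad_{H_{\varepsilon_i}})^2 S} = 4\Norm{H_{\varepsilon_i}}^4 \Norm{S}^2 = 4\Norm{H_{\varepsilon_i}}^6.
\end{equation*}
Equating the two expressions gives $4\Norm{H_{\varepsilon_i}}^2 \Norm{Z_{2\varepsilon_i}}^2 = 1$.

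The only step that requires any real insight beyond formal bracket manipulation is the vanishing $[Z_{2\varepsilon_k}, H_{\varepsilon_i}] = 0$ for $k\neq i$; once this is in place, the remaining work is straightforward Jacobi-identity and invariance bookkeeping, and the two parts of the proposition fall out in tandem.
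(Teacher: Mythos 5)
Your proof is correct and follows essentially the same route as the paper: part (1) is the paper's argument, with you making explicit the skew-symmetry and Jacobi-identity steps (for $[Z_{2\varepsilon_k},H_{\varepsilon_i}]=0$, $k\neq i$, and for killing the $Z_0$-contribution) that the paper leaves implicit. Part (2) uses exactly the same ingredients --- statement (1), invariance of the inner product, and $(\ad_{H_{\varepsilon_i}})^2 = -4\Norm{H_{\varepsilon_i}}^4 \id$ on $\g_{2\varepsilon_i}$ --- merely repackaged as two computations of $\Norm{[H_{\varepsilon_i},S]}^2$ instead of the paper's single chain $\Norm{H_{\varepsilon_i}}^2 = -\ideal{Z_{2\varepsilon_i},[H_{\varepsilon_i},[H_{\varepsilon_i},Z_{2\varepsilon_i}]]} = 4\Norm{H_{\varepsilon_i}}^4\Norm{Z_{2\varepsilon_i}}^2$.
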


\begin{proof}
We show (1). 
By definition of $Z$, one has 
\begin{equation} 
- H_{\varepsilon_i} = (\ad_{Z})^2\left( H_{\varepsilon_i} \right). 
\end{equation}
Lemma~\ref{Lem:root-vector} yields that $2 \varepsilon_i (H_{\varepsilon_j}) = 0$ if $i \neq j$. 
We thus have
\begin{equation}
\ad_{Z} \left( H_{\varepsilon_i} \right) 
= \left[ Z_0 + Z_{2 \varepsilon_1} + \cdots + Z_{2 \varepsilon_r} , H_{\varepsilon_i} \right] 
= \left[ Z_{2 \varepsilon_i} , H_{\varepsilon_i} \right] 
\in \p_{2 \varepsilon_i}. 
\end{equation}
By Lemma \ref{Lem:k-epsilon}, one knows $[\k_{2 \varepsilon_i} , \p_{2 \varepsilon_j}] = 0$ for $i \neq j$. 
Therefore, we have 
\begin{equation}
(\ad_{Z})^2 \left( H_{\varepsilon_i} \right) 
= \left[ Z_0 + Z_{2 \varepsilon_1} + \cdots + Z_{2 \varepsilon_r} , 
\left[ Z_{2 \varepsilon_i} , H_{\varepsilon_i} \right] \right] 
= (\ad_{Z_{2\varepsilon_i}})^2 \left( H_{\varepsilon_i} \right) , 
\end{equation}
which concludes the proof of (1). 

We show (2). 
It follows from (1) that
\begin{equation}\label{eq:4.6}
\Norm{H_{\varepsilon_i}}^2 
= -\ideal{\left( \ad_{Z_{2\varepsilon_i}} \right)^2\left( H_{\varepsilon_i} \right), H_{\varepsilon_i}} 
= -\ideal{Z_{2\varepsilon_i}, \left[ H_{\varepsilon_i}, \left[ H_{\varepsilon_i}, Z_{2\varepsilon_i}\right] \right]} . 
\end{equation}
By definition of roots, one has 
\begin{equation}\label{eq:4.8}
\left[ H_{\varepsilon_i}, \left[ H_{\varepsilon_i}, Z_{2\varepsilon_i}\right] \right] 
= -\left( 2\varepsilon_i \left( H_{\varepsilon_i} \right) \right)^2 Z_{2\varepsilon_i} 
= -4 \Norm{ H_{\varepsilon_i} }^4 Z_{2\varepsilon_i} . 
\end{equation}
By substituting \eqref{eq:4.8} for \eqref{eq:4.6}, we get 
\begin{equation}
\Norm{H_{\varepsilon_i}}^2 
= 4 \Norm{ H_{\varepsilon_i} }^4 \Norm{ Z_{2\varepsilon_i} }^2 . 
\end{equation}
Since $H_{\varepsilon_i} \neq 0$, our assertion is proved. 
\end{proof}

%
%
\subsection{Preliminaries on root systems for rank two Hermitian case}
\label{Sec:rank 2}

Hereafter, let $G / K$ be a compact irreducible Hermitian symmetric space of rank two. 
In this subsection, we give formulas for $N := \dim \p$ and $\Norm{H_{\varepsilon_i}}^2$ in terms of the roots and the multiplicities. 

Recall that the root system $\Delta$ of $G / K$ is of type $BC_2$ or $C_2$. 
For the case of type $BC_2$, as mentioned in Subsection~\ref{subsection:root-hermitian}, $\Delta$ is given by
\begin{equation} 
\Delta = \Delta(BC_2) 
= \pm\left\{ \varepsilon_1,\ \varepsilon_2,\ 2\varepsilon_1,\ 2\varepsilon_2,\ \varepsilon_1 + \varepsilon_2,\ \varepsilon_1 - \varepsilon_2 \right\} . 
\end{equation} 
Since an element $\sigma$ of the Weyl group induces a linear isomorphism from $\k_{\alpha}$ onto $\k_{\sigma(\alpha)}$, the multiplicities $m_{\alpha}$ of roots are invariant under the action of the Weyl group. 
Hence we have
\begin{equation} 
m_{\varepsilon_1 + \varepsilon_2} = m_{\varepsilon_1 - \varepsilon_2}, \quad 
m_{\varepsilon_1} = m_{\varepsilon_2} , \quad 
m_{2\varepsilon_1} = m_{2\varepsilon_2} = 1 .
\end{equation}
We regard $\Delta(C_2)$ as $\Delta(BC_2)$ with $m_{\varepsilon_1} = m_{\varepsilon_2} = 0$. 
We thus consider $\Delta(BC_2)$ only. 
To make the latter convenient, we put 
\begin{equation} 
\label{eq:multiplicities}
m_1 := m_{\varepsilon_1 + \varepsilon_2} , \quad 
m_2 := m_{\varepsilon_1} + m_{2 \varepsilon_1} = m_{\varepsilon_1} + 1 . 
\end{equation}
Note that $m_1$ and $m_2$ will be the multiplicities of the principal curvatures of the isoparametric hypersurfaces in spheres obtained from the isotropy representations of $G/K$. 

We first give a formula for $N = \dim \p$, which follows easily. 

\begin{Lem}
\label{Lem:length-N}
We have $N = 2 m_{1} + 2 m_{2} + 2$. 
\end{Lem}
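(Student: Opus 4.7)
The plan is to read off $\dim \p$ from the root space decomposition in Proposition~\ref{Prop:Loos}~(1) applied to our rank two Hermitian case, and then translate the multiplicities of the individual roots into the combination $m_1, m_2$ defined in \eqref{eq:multiplicities}.

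Concretely, I would first use $\p = \a \oplus \bigoplus_{\alpha \in \Delta^+} \p_\alpha$ together with $\dim \a = \rank G/K = 2$ to write
\[
N = 2 + \sum_{\alpha \in \Delta^+} m_\alpha.
\]
Since we are treating $\Delta(C_2)$ as $\Delta(BC_2)$ with the convention $m_{\varepsilon_1} = m_{\varepsilon_2} = 0$, a single positive system works for both: namely
\[
\Delta^+ = \{\varepsilon_1,\ \varepsilon_2,\ 2\varepsilon_1,\ 2\varepsilon_2,\ \varepsilon_1+\varepsilon_2,\ \varepsilon_1-\varepsilon_2\}.
\]

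Next I would invoke the Weyl-group invariance of multiplicities already recalled in Subsection~\ref{Sec:rank 2}: $m_{\varepsilon_1}=m_{\varepsilon_2}$, $m_{2\varepsilon_1}=m_{2\varepsilon_2}=1$, and $m_{\varepsilon_1+\varepsilon_2}=m_{\varepsilon_1-\varepsilon_2}=m_1$. Substituting yields
\[
N = 2 + 2 m_{\varepsilon_1} + 2 + 2 m_1 = 2 m_1 + 2 m_{\varepsilon_1} + 4.
\]
Finally, the definition $m_2 = m_{\varepsilon_1} + m_{2\varepsilon_1} = m_{\varepsilon_1} + 1$ from \eqref{eq:multiplicities} gives $m_{\varepsilon_1} = m_2 - 1$, so
\[
N = 2 m_1 + 2(m_2-1) + 4 = 2 m_1 + 2 m_2 + 2,
\]
as claimed.

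There is no real obstacle here; the entire statement is bookkeeping on top of Proposition~\ref{Prop:Loos}~(1) and the explicit form of $\Delta(BC_2)$. The only point worth being careful about is the handling of the $C_2$ case, but the paper's convention of setting $m_{\varepsilon_1}=m_{\varepsilon_2}=0$ there makes the single computation above cover both root-system types uniformly.
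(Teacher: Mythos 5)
Your proof is correct and is exactly the computation the paper leaves implicit: the paper's proof simply states that the lemma is a direct consequence of the root space decomposition of $\p$, and your counting of $\dim\a = 2$ plus the multiplicities of the positive roots of $\Delta(BC_2)$, rewritten via $m_{\varepsilon_1} = m_2 - 1$, is precisely that consequence spelled out. No difference in approach, and the handling of the $C_2$ case via the convention $m_{\varepsilon_1} = m_{\varepsilon_2} = 0$ matches the paper's.
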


\begin{proof}
This is a direct consequence of the root space decomposition of $\p$. 
\end{proof}

We next compute the length of the root vectors 
$H_{\varepsilon_1}$ and $H_{\varepsilon_2}$. 

\begin{Lem}\label{Lem:4.4} 
We have 
$1 / {\Norm{H_{\varepsilon_1}}^2} = 1 / {\Norm{H_{\varepsilon_2}}^2} 
= 2(2m_{1} + m_{2} + 3)$. 
\end{Lem}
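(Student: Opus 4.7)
The plan is to reduce the claim to a single computation using the trace formula of Lemma~\ref{Lem:3.3}. First, by Lemma~\ref{Lem:root-vector}(1), the equality $\Norm{H_{\varepsilon_1}}^2 = \Norm{H_{\varepsilon_2}}^2$ is already known, so I only need to compute $\Norm{H_{\varepsilon_1}}^2$ (or equivalently its reciprocal). The key tool is Lemma~\ref{Lem:3.3} applied to $P = H_{\varepsilon_1} \in \a \subset \p$, which gives
\begin{equation}
\Norm{H_{\varepsilon_1}}^2 = -2\Tr\left( \left. (\ad_{H_{\varepsilon_1}})^2\right|_\p\right).
\end{equation}

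Next I would use the root space decomposition $\p = \a \oplus \bigoplus_{\alpha \in \Delta^+} \p_\alpha$ from Proposition~\ref{Prop:Loos}(1). Since $H_{\varepsilon_1} \in \a$, the operator $(\ad_{H_{\varepsilon_1}})^2$ vanishes on $\a$ and acts on $\p_\alpha$ as $-\alpha(H_{\varepsilon_1})^2 \id_{\p_\alpha}$. Hence
\begin{equation}
\Norm{H_{\varepsilon_1}}^2 = 2 \sum_{\alpha \in \Delta^+} m_\alpha\, \alpha(H_{\varepsilon_1})^2.
\end{equation}

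The third step is to evaluate each $\alpha(H_{\varepsilon_1}) = \ideal{H_\alpha, H_{\varepsilon_1}}$ for every $\alpha \in \Delta^+(BC_2)$. The root vectors satisfy $H_{c\alpha} = c H_\alpha$ by $\bR$-linearity of the defining identity, and by Lemma~\ref{Lem:root-vector} we have $\ideal{H_{\varepsilon_1}, H_{\varepsilon_2}} = 0$ and $\ideal{H_{\varepsilon_1}, H_{\varepsilon_1}} = \Norm{H_{\varepsilon_1}}^2$. So $\varepsilon_2(H_{\varepsilon_1}) = 2\varepsilon_2(H_{\varepsilon_1}) = 0$, while $\varepsilon_1(H_{\varepsilon_1}) = \Norm{H_{\varepsilon_1}}^2$, $2\varepsilon_1(H_{\varepsilon_1}) = 2\Norm{H_{\varepsilon_1}}^2$, and $(\varepsilon_1 \pm \varepsilon_2)(H_{\varepsilon_1}) = \Norm{H_{\varepsilon_1}}^2$. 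The only positive roots whose square $\alpha(H_{\varepsilon_1})^2$ is nonzero contribute with multiplicities $m_{\varepsilon_1}$, $m_{2\varepsilon_1} = 1$, $m_{\varepsilon_1+\varepsilon_2} = m_1$ and $m_{\varepsilon_1-\varepsilon_2} = m_1$, yielding
\begin{equation}
\Norm{H_{\varepsilon_1}}^2 = 2\Norm{H_{\varepsilon_1}}^4 \left( m_{\varepsilon_1} + 4 + 2 m_1 \right).
\end{equation}

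Finally, substituting $m_{\varepsilon_1} = m_2 - 1$ from \eqref{eq:multiplicities} gives $m_{\varepsilon_1} + 4 + 2m_1 = 2m_1 + m_2 + 3$, and dividing both sides by $\Norm{H_{\varepsilon_1}}^4$ produces the desired identity. There is no real obstacle here; the argument is essentially a careful bookkeeping of the $BC_2$ positive roots together with their multiplicities, with the only subtlety being the correct conversion between the intrinsic multiplicities $m_{\varepsilon_1}, m_{2\varepsilon_1}$ and the aggregated multiplicity $m_2$.
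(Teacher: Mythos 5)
Your proof is correct and follows essentially the same route as the paper: both apply Lemma~\ref{Lem:3.3} to $H_{\varepsilon_1}$, expand the trace over the root space decomposition to obtain $\Norm{H_{\varepsilon_1}}^2 = 2\sum_{\alpha \in \Delta^+} m_\alpha\, \alpha(H_{\varepsilon_1})^2$, and then evaluate the six positive roots of $BC_2$ with the identifications $m_{\varepsilon_1} = m_2 - 1$, $m_{\varepsilon_1 \pm \varepsilon_2} = m_1$, $m_{2\varepsilon_1} = 1$. The bookkeeping matches the paper's computation exactly.
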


\begin{proof}
One knows $\Norm{H_{\varepsilon_1}}^2 = \Norm{H_{\varepsilon_2}}^2$ from Lemma \ref{Lem:root-vector}. 
Thus it is sufficient to compute $\Norm{H_{\varepsilon_1}}^2$. 

First of all, we derive a general formula of $\Norm{H}^2$ for $H \in \a$. 
By Lemma \ref{Lem:3.3}, we have
\begin{equation}
\Norm{H}^2 = -2 \Tr\left(\left. ( \ad_{H} )^2 \right|_\p \right) 
= -2 \sum_{j} \ideal{\left( \ad_{H} \right)^2(P_j), P_j} , 
\end{equation}
where $\{ P_j \}$ is an orthonormal basis of $\p$. 
Since the representation of the right-hand side is independent of 
the choice of an orthonormal basis of $\p$, 
we can assume each $P_j$ is a unit vector in $\a$ or 
some root space $\p_{\alpha}$. 
For a unit vector $P_\alpha \in \p_\alpha$, we have
\begin{equation}
\ideal{\left( \ad_{ H } \right)^2 (P_\alpha), P_\alpha} 
= \ideal{- \alpha( H )^2 P_\alpha , P_\alpha} = - \alpha( H )^2 . 
\end{equation}
Hence, it follows that 
\begin{equation}
\Norm{H}^2 
= -2 \sum_{i} \ideal{\left( \ad_{H} \right)^2(P_i), P_i} 
= 2\sum_{\alpha} m_{\alpha} \alpha(H)^2. 
\end{equation}
Note that this formula is true for every symmetric space. 

We use a notation $\ideal{\alpha , \beta} := \alpha(H_{\beta})$. 
Then, since the root system is of $BC_2$-type, it follows 
\begin{equation}
\begin{split}
\Norm{H_{\varepsilon_1}}^2 &= 2\left( m_{\varepsilon_1 - \varepsilon_2}\ideal{\varepsilon_1 - \varepsilon_2, \varepsilon_1}^2 + m_{\varepsilon_2}\ideal{\varepsilon_2, \varepsilon_1}^2 + m_{2\varepsilon_2}\ideal{2\varepsilon_2, \varepsilon_1}^2 \right.\\
&\hspace{30pt}\left. m_{\varepsilon_1}\ideal{\varepsilon_1, \varepsilon_1}^2 + m_{\varepsilon_1 + \varepsilon_2}\ideal{\varepsilon_1 + \varepsilon_2, \varepsilon_1}^2 + m_{2\varepsilon_1}\ideal{2\varepsilon_1, \varepsilon_1}^2 \right)\\
&= 2\left( 
m_1 + (m_2 - 1) + m_1 + 4 
\right) \ideal{\varepsilon_1, \varepsilon_1}^2\\
&= 2\left( 
2 m_1 + m_2 + 3 
\right) \Norm{H_{\varepsilon_1}}^4 . 
\end{split}
\end{equation}
Since $H_{\varepsilon_1} \neq 0$, this completes the proof. 
\end{proof}

%
%
\subsection{The squared-norm of the gradient}

According to Proposition \ref{Prop:3.7}, the squared-norm $\Norm{\grad f_{a, b}(P)}^2$ of the gradient can be written as a linear combination of $\Norm{[J(P), \mu(P)]}^2$, $\Norm{P}^2\Norm{\mu(P)}^2$ and $\Norm{P}^6$. 
In this subsection, we simplify this presentation in the case of $\rank G / K = 2$. 

We need formulas for $\Norm{[J(P), \mu(P)]}^2$, $\Norm{P}^2\Norm{\mu(P)}^2$ and $\Norm{P}^6$ when $P \in \a$. 
First of all, we give a formula for $\Norm{P}^6$. 

\begin{Lem}\label{Lem:P6}
For $P = a_1 H_{\varepsilon_1} + a_2 H_{\varepsilon_2} \in \a$, 
we have 
\begin{enumerate}
\item
$\Norm{P}^2 = ({a_1}^2 + {a_2}^2) \Norm{H_{\varepsilon_1}}^2$, 
\item
$\Norm{P}^6 = ( {a_1}^6 + 3 {a_1}^4 {a_2}^2 + 3 {a_1}^2 {a_2}^4 + {a_2}^6 ) 
\Norm{H_{\varepsilon_1}}^6$. 
\end{enumerate}
\end{Lem}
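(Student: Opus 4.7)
The proof is essentially a direct computation resting on two facts from Lemma~\ref{Lem:root-vector}, namely $\Norm{H_{\varepsilon_1}}^2 = \Norm{H_{\varepsilon_2}}^2$ and $\ideal{H_{\varepsilon_1}, H_{\varepsilon_2}} = 0$. The plan is to establish (1) by expanding the inner product bilinearly, and then deduce (2) by cubing (1) via the binomial theorem.

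First I would compute
\begin{equation*}
\Norm{P}^2 = \ideal{a_1 H_{\varepsilon_1} + a_2 H_{\varepsilon_2},\ a_1 H_{\varepsilon_1} + a_2 H_{\varepsilon_2}} = a_1^2 \Norm{H_{\varepsilon_1}}^2 + 2 a_1 a_2 \ideal{H_{\varepsilon_1}, H_{\varepsilon_2}} + a_2^2 \Norm{H_{\varepsilon_2}}^2.
\end{equation*}
Lemma~\ref{Lem:root-vector}~(2) kills the cross term, and part~(1) of the same lemma replaces $\Norm{H_{\varepsilon_2}}^2$ by $\Norm{H_{\varepsilon_1}}^2$, which yields statement (1).

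For (2), cubing the identity from (1) gives $\Norm{P}^6 = (a_1^2 + a_2^2)^3 \Norm{H_{\varepsilon_1}}^6$, and the binomial expansion $(a_1^2 + a_2^2)^3 = a_1^6 + 3 a_1^4 a_2^2 + 3 a_1^2 a_2^4 + a_2^6$ finishes the argument.

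There is no real obstacle here; the statement is a bookkeeping lemma collecting the formulas for $\Norm{P}^2$ and $\Norm{P}^6$ in the coordinates dual to the chosen root vectors, in a form that will be convenient for combining with the analogous formulas for $\Norm{[J(P),\mu(P)]}^2$ and $\Norm{P}^2 \Norm{\mu(P)}^2$ in the subsequent simplification of $\Norm{\grad f_{a,b}(P)}^2$.
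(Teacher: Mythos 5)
Your proposal is correct and follows exactly the same route as the paper's proof: part (1) via bilinear expansion using $\ideal{H_{\varepsilon_1}, H_{\varepsilon_2}} = 0$ and $\Norm{H_{\varepsilon_1}}^2 = \Norm{H_{\varepsilon_2}}^2$ from Lemma~\ref{Lem:root-vector}, and part (2) by cubing. The paper merely states these steps more tersely.
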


\begin{proof}
The first claim follows from $\ideal{H_{\varepsilon_1} , H_{\varepsilon_2}} = 0$ and $\Norm{H_{\varepsilon_1}}^2 = \Norm{H_{\varepsilon_2}}^2$ (see Lemma \ref{Lem:root-vector}). 
The second claim follows easily from the first one. 
\end{proof}

Next we give a formula for $\Norm{P}^2\Norm{\mu(P)}^2$. 

\begin{Lem}
\label{Lem:P2mu2}
For $P = a_1 H_{\varepsilon_1} + a_2 H_{\varepsilon_2} \in \a$, 
we have 
\begin{enumerate}
\item
$\mu(P) = -2 ({a_1}^2 Z_{2\varepsilon_1} + {a_2}^2 Z_{2\varepsilon_2}) 
\Norm{H_{\varepsilon_1}}^4$, 
\item
$\Norm{P}^2 \Norm{\mu(P)}^2 
= ({a_1}^6 + {a_1}^4 {a_2}^2 + {a_1}^2 {a_2}^4 + {a_2}^6) 
\Norm{H_{\varepsilon_1}}^8$. 
\end{enumerate}
\end{Lem}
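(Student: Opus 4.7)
The plan is to compute $\mu(P) = \tfrac{1}{2}[P, [P, Z]]$ directly from Proposition \ref{Prop:2.1}, substituting the decomposition
\[
Z = Z_0 + Z_{2\varepsilon_1} + Z_{2\varepsilon_2}
\]
from Proposition \ref{Prop:4.1}. The key observation is that, since $P \in \a$, we can apply the defining identity of root spaces $(\ad_H)^2 X = -\alpha(H)^2 X$ for $X \in \g_\alpha$ and $H \in \a$ summand by summand, bypassing any iterated bracket-by-bracket calculation.

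For (1), I first note that $Z_0 \in \k_0$ centralizes $\a$, so $(\ad_P)^2 Z_0 = 0$. For the remaining two summands $Z_{2\varepsilon_j} \in \k_{2\varepsilon_j}$, the defining property of root spaces yields
\[
(\ad_P)^2 Z_{2\varepsilon_j} = -(2\varepsilon_j(P))^2 Z_{2\varepsilon_j}.
\]
The scalar is then evaluated via Lemma \ref{Lem:root-vector}: using $\ideal{\varepsilon_1, \varepsilon_2} = 0$ and $\Norm{H_{\varepsilon_1}}^2 = \Norm{H_{\varepsilon_2}}^2$, one finds $2\varepsilon_j(P) = 2 a_j \Norm{H_{\varepsilon_1}}^2$. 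Plugging these into $\mu(P) = \tfrac{1}{2}(\ad_P)^2 Z$ produces the formula asserted in (1).

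For (2), I take the squared-norm of (1). Since $2\varepsilon_1 \neq \pm 2\varepsilon_2$, the root spaces $\k_{2\varepsilon_1}$ and $\k_{2\varepsilon_2}$ are orthogonal with respect to $\ideal{\ ,\ }$ by the standard root-space orthogonality, so the cross term vanishes. Combining the diagonal contributions with the length formula $\Norm{Z_{2\varepsilon_i}}^2 = 1/(4\Norm{H_{\varepsilon_1}}^2)$ from Proposition \ref{Prop:4.3} (2) gives $\Norm{\mu(P)}^2 = (a_1^4 + a_2^4)\Norm{H_{\varepsilon_1}}^6$. Multiplying by $\Norm{P}^2 = (a_1^2 + a_2^2)\Norm{H_{\varepsilon_1}}^2$ from Lemma \ref{Lem:P6} (1) and expanding the product of a sum of squares and a sum of fourth powers yields (2).

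There is no serious obstacle here; the only point that requires care is recognizing that $(\ad_P)^2$ acts diagonally on the three-term decomposition of $Z$ through the scalar eigenvalues $-(2\varepsilon_j(P))^2$, which avoids any need to inspect mixed iterated brackets of the form $[H_{\varepsilon_i}, [H_{\varepsilon_k}, Z_{2\varepsilon_j}]]$ with distinct indices. Everything else is straightforward substitution and algebraic expansion.
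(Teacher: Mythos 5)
Your proposal is correct and follows essentially the same route as the paper: decompose $Z = Z_0 + Z_{2\varepsilon_1} + Z_{2\varepsilon_2}$, apply the root-space eigenvalue identity $(\ad_P)^2 Z_{2\varepsilon_j} = -(2\varepsilon_j(P))^2 Z_{2\varepsilon_j}$ for $P \in \a$, then use orthogonality of distinct root spaces together with Proposition~\ref{Prop:4.3}~(2) and Lemma~\ref{Lem:P6}~(1) for the norm computation. The only difference is notational (you phrase the iterated brackets as the diagonal action of $(\ad_P)^2$), so there is nothing to add.
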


\begin{proof}
We show (1). 
By the decomposition $Z = Z_0 + Z_{2\varepsilon_1} + Z_{2\varepsilon_2}$, it follows that 
\begin{equation}\label{eq:4.23}
2 \mu(P) 
= [P, [P, Z]] 
= [P, [P, Z_0]] + [P, [P, Z_{2\varepsilon_1}]] + [P, [P, Z_{2\varepsilon_2}]] . 
\end{equation}
Since $P \in \a$, the definition of roots yields that 
\begin{equation}\label{eq:4.24}
\begin{split}
[P, [P, Z_0]] & = 0 , \\ 
[P, [P, Z_{2\varepsilon_1}]] 
& = - \left( 2\varepsilon_1(P) \right)^2 Z_{2\varepsilon_1} 
= -4 {a_1}^2 \Norm{H_{\varepsilon_1}}^4 Z_{2\varepsilon_1} , \\ 
[P, [P, Z_{2\varepsilon_2}]] 
& = - \left( 2\varepsilon_2(P) \right)^2 Z_{2\varepsilon_2} 
= -4 {a_2}^2 \Norm{H_{\varepsilon_1}}^4 Z_{2\varepsilon_2} . 
\end{split}
\end{equation}
By substituting \eqref{eq:4.24} for \eqref{eq:4.23}, we conclude the proof of (1). 

We show (2). 
Recall that different root spaces are orthogonal to each other (cf. \cite[Chapter~III, Theorem~4.2]{Helgason}). 
This yields that 
\begin{equation}
\ideal{Z_{2\varepsilon_1} , Z_{2\varepsilon_2}} = 0 . 
\end{equation}
Then, it follows from (1) and Proposition \ref{Prop:4.3} that 
\begin{equation}
\Norm{ \mu(P) }^2 
= 4 \Norm{H_{\varepsilon_1}}^8 \left(
{a_1}^4 \Norm{Z_{2\varepsilon_1}}^2 + {a_2}^4 \Norm{Z_{2\varepsilon_2}}^2
\right) = ({a_1}^4 + {a_2}^4) \Norm{H_{\varepsilon_1}}^6 . 
\end{equation}
This and Lemma \ref{Lem:P6} (1) conclude the proof of (2). 
\end{proof}

We then give a formula for $\Norm{ [J(P), \mu(P)] }^2$. 

\begin{Lem}\label{Lem:Jmu}
For $P = a_1 H_{\varepsilon_1} + a_2 H_{\varepsilon_2} \in \a$, we have 
\begin{equation}
\Norm{ [J(P), \mu(P)] }^2 
= 4 \left( {a_1}^6 + {a_2}^6 \right) \Norm{H_{\varepsilon_1}}^{10} . 
\end{equation}
\end{Lem}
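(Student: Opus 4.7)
The plan is to reduce $[J(P),\mu(P)]$ to an element of $\a$ and then apply Lemma~\ref{Lem:root-vector}. Since Lemma~\ref{Lem:P2mu2}~(1) already supplies an explicit form for $\mu(P)$, I only need a comparable formula for $J(P)=[Z,P]$, after which a short expansion of the outer bracket reduces the problem to a handful of root-space computations already covered by the lemmas at hand.

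The first step is to compute $J(P)$ using the decomposition $Z = Z_0 + Z_{2\varepsilon_1} + Z_{2\varepsilon_2}$ from~\eqref{eq:4.4}. The contribution $[Z_0,P]$ vanishes since $Z_0 \in \k_0$ centralizes $\a$. The crucial observation is that $[H_{\varepsilon_i}, Z_{2\varepsilon_j}] = 0$ whenever $i \neq j$: the bracket lies in the one-dimensional space $\p_{2\varepsilon_j}$, and since $2\varepsilon_j(H_{\varepsilon_i}) = 2\ideal{H_{\varepsilon_j},H_{\varepsilon_i}} = 0$ by Lemma~\ref{Lem:root-vector}~(2), the restriction of $\ad_{H_{\varepsilon_i}}$ to $\g_{2\varepsilon_j}$ squares to zero; being also skew-symmetric with respect to $\ideal{\,,\,}$, it must vanish outright. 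Setting $W_i := [Z_{2\varepsilon_i},H_{\varepsilon_i}] \in \p_{2\varepsilon_i}$, this yields the clean formula $J(P) = a_1 W_1 + a_2 W_2$.

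Next I would expand $[J(P),\mu(P)]$ into the four terms indexed by pairs $(i,j)$ coming from $J(P)$ and from Lemma~\ref{Lem:P2mu2}~(1). The off-diagonal terms $[W_i,Z_{2\varepsilon_j}]$ with $i\neq j$ vanish by Proposition~\ref{Prop:Loos}~(3), since they lie in $\k_{2\varepsilon_i+2\varepsilon_j} + \k_{2\varepsilon_i-2\varepsilon_j}$ and neither $2\varepsilon_i \pm 2\varepsilon_j$ is a root of $BC_2$. For each diagonal term, Proposition~\ref{Prop:4.3}~(1) gives
\begin{equation*}
[W_i,Z_{2\varepsilon_i}] \;=\; -[Z_{2\varepsilon_i},[Z_{2\varepsilon_i},H_{\varepsilon_i}]] \;=\; -(\ad_{Z_{2\varepsilon_i}})^2(H_{\varepsilon_i}) \;=\; H_{\varepsilon_i}.
\end{equation*}
Putting everything together gives $[J(P),\mu(P)] = -2\Norm{H_{\varepsilon_1}}^4\bigl(a_1^3 H_{\varepsilon_1} + a_2^3 H_{\varepsilon_2}\bigr)$, and taking the squared norm while invoking $\ideal{H_{\varepsilon_1},H_{\varepsilon_2}}=0$ and $\Norm{H_{\varepsilon_1}}=\Norm{H_{\varepsilon_2}}$ (Lemma~\ref{Lem:root-vector}) produces the claimed identity.

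The only step that is not a direct application of a previously established lemma is the vanishing $[H_{\varepsilon_i},Z_{2\varepsilon_j}]=0$ for $i\neq j$, and this is where I expect the main obstacle to lie: Proposition~\ref{Prop:Loos}~(3) only confines the bracket to $\p_{2\varepsilon_j}$, so promoting this to genuine vanishing requires the short skew-symmetry-plus-nilpotency argument outlined above. Everything else is routine bookkeeping with root-space brackets.
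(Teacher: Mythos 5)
Your proof is correct and follows essentially the same route as the paper: decompose $Z$ as in \eqref{eq:4.4}, show $J(P)=a_1[Z_{2\varepsilon_1},H_{\varepsilon_1}]+a_2[Z_{2\varepsilon_2},H_{\varepsilon_2}]$, kill the off-diagonal brackets because $2\varepsilon_i\pm 2\varepsilon_j$ are not roots, and reduce the diagonal terms to $H_{\varepsilon_i}$ via Proposition~\ref{Prop:4.3}~(1); your explicit skew-symmetry argument for $[H_{\varepsilon_i},Z_{2\varepsilon_j}]=0$ just makes precise what the paper leaves implicit. The only blemish is that $[\p_{2\varepsilon_i},\k_{2\varepsilon_j}]$ lies in $\p_{2\varepsilon_i+2\varepsilon_j}+\p_{2\varepsilon_i-2\varepsilon_j}$ rather than the corresponding $\k$-spaces, which does not affect the conclusion.
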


\begin{proof}
Recall that $[\k_0 , \a] = 0$ and $\ideal{H_{\varepsilon_1} , H_{\varepsilon_2}} = 0$ (see Lemma~\ref{Lem:root-vector}). 
Then we have 
\begin{equation}\label{eq:4.29}
J(P) 
= [ Z_0 + Z_{2\varepsilon_1} + Z_{2\varepsilon_2} , 
a_1 H_{\varepsilon_1} + a_2 H_{\varepsilon_2} ] 
= a_1 [Z_{2\varepsilon_1} , H_{\varepsilon_1} ] 
+ a_2 [Z_{2\varepsilon_2} , H_{\varepsilon_2} ] . 
\end{equation}
Lemma~\ref{Lem:P2mu2} (1) yields that
\begin{equation}
[J(P), \mu(P)] 
= \left[ a_1 [Z_{2\varepsilon_1} , H_{\varepsilon_1} ] 
+ a_2 [Z_{2\varepsilon_2} , H_{\varepsilon_2} ] , \ 
- 2 ({a_1}^2 Z_{2\varepsilon_1} + 2{a_2}^2 Z_{2\varepsilon_2}) 
\Norm{H_{\varepsilon_1}}^4 \right] . 
\end{equation}
From Lemma~\ref{Lem:k-epsilon}, one has 
\begin{equation}
[[Z_{2\varepsilon_1} , H_{\varepsilon_1} ] , Z_{2\varepsilon_2} ] 
= [[Z_{2\varepsilon_2} , H_{\varepsilon_2} ] , Z_{2\varepsilon_1} ] 
= 0 . 
\end{equation}
Therefore, Proposition~\ref{Prop:4.3} (1) yields that 
\begin{equation}\label{eq:4.30}
\begin{split}
[J(P), \mu(P)] 
& = 
- 2 \Norm{H_{\varepsilon_1}}^4  \left(
{a_1}^3 [[Z_{2\varepsilon_1} , H_{\varepsilon_1}], Z_{2\varepsilon_1}] 
+ {a_2}^3 [[Z_{2\varepsilon_2} , H_{\varepsilon_2}], Z_{2\varepsilon_2}] 
\right) \\ 
& = 
-2 \Norm{H_{\varepsilon_1}}^4  \left(
{a_1}^3 H_{\varepsilon_1} + {a_2}^3 H_{\varepsilon_2} 
\right) . 
\end{split}
\end{equation}
By taking the squared-norms, Lemma~\ref{Lem:P6} (1) concludes the proof. 
\end{proof}

By using these lemmas, $\Norm{[J(P), \mu(P)]}^2$ can be written as a linear combination of $\Norm{P}^2\Norm{\mu(P)}^2$ and $\Norm{P}^6$. 

\begin{Lem}\label{Lem:4.7}
For all $P \in \p$, we have 
\begin{equation}\label{eq:4.27}
\Norm{ \left[ J(P), \mu(P) \right] }^2 
= - 2 \Norm{H_{\varepsilon_1}}^4 \Norm{P}^6 
+ 6 \Norm{H_{\varepsilon_1}}^2 \Norm{P}^2\Norm{\mu(P)}^2 . 
\end{equation}
\end{Lem}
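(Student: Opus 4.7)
The plan is to reduce the identity to the maximal abelian subspace $\a$ by exploiting $K$-invariance, and then to verify it there as an algebraic identity in two variables $a_1, a_2$, using the three preceding lemmas (Lemma \ref{Lem:P6}, Lemma \ref{Lem:P2mu2}, and Lemma \ref{Lem:Jmu}).

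First I would check that both sides of \eqref{eq:4.27} are $K$-invariant functions of $P \in \p$. The right-hand side is clearly $\Ad_K$-invariant since $\Norm{\,\cdot\,}$ is. For the left-hand side, recall that $Z$ lies in the center $C(\k)$, so $J = \ad_Z|_{\p}$ commutes with $\Ad_k$ for every $k \in K$; combined with the $K$-equivariance of $\mu$ (Proposition~\ref{Prop:2.1}), one gets
\begin{equation}
[J(\Ad_k P), \mu(\Ad_k P)] = \Ad_k[J(P), \mu(P)],
\end{equation}
whose squared norm is $\Ad_K$-invariant. Since every $K$-orbit on $\p$ meets $\a$ (a standard property of the isotropy representation of a symmetric space), it suffices to prove \eqref{eq:4.27} for $P = a_1 H_{\varepsilon_1} + a_2 H_{\varepsilon_2} \in \a$.

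On $\a$, the three preceding lemmas give explicit polynomials in $a_1, a_2$ times powers of $\Norm{H_{\varepsilon_1}}$. Substituting them, the claim reduces to the polynomial identity
\begin{equation}
4(a_1^6 + a_2^6) = -2(a_1^6 + 3a_1^4 a_2^2 + 3 a_1^2 a_2^4 + a_2^6) + 6(a_1^6 + a_1^4 a_2^2 + a_1^2 a_2^4 + a_2^6),
\end{equation}
after dividing by the common factor $\Norm{H_{\varepsilon_1}}^{10}$. This is a one-line verification: the $a_1^4 a_2^2$ and $a_1^2 a_2^4$ terms cancel, and the $a_1^6$, $a_2^6$ coefficients match.

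There is no real obstacle here; the substantive work was already done in establishing Lemmas \ref{Lem:P6}--\ref{Lem:Jmu}, which rely crucially on Proposition~\ref{Prop:4.3} and the vanishing brackets in Lemma~\ref{Lem:k-epsilon}. The only point to be careful about is the $K$-invariance argument for $[J(P), \mu(P)]$, which requires $Z \in C(\k)$ rather than an arbitrary element; beyond that, the proof is a clean reduction followed by polynomial bookkeeping.
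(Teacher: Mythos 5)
Your proposal is correct and follows essentially the same route as the paper: both sides are $K$-invariant, so the identity is checked on $\a$ by substituting Lemmas \ref{Lem:P6}, \ref{Lem:P2mu2} and \ref{Lem:Jmu} and matching the resulting polynomials in $a_1, a_2$ (the paper asserts the $K$-invariance of the left-hand side without the explicit equivariance computation you supply, which is a harmless extra detail). The polynomial bookkeeping you give is exactly the verification in the paper's proof.
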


\begin{proof}
In general, if $f, g : \p \to \bR$ are $K$-invariant functions and satisfy $f |_{\a} = g |_{\a}$, then $f=g$ holds. 
Since the both sides of \eqref{eq:4.27} are $K$-invariant, we have only to prove that they coincide on $\a$. 
To show this, take any $P \in \a$, and write $P = a_1 H_{\varepsilon_1} + a_2 H_{\varepsilon_2}$. 
From Lemmas \ref{Lem:P6} and \ref{Lem:P2mu2}, one has 
\begin{equation}
\begin{split}
- 2 \Norm{H_{\varepsilon_1}}^4 \Norm{P}^6 
& = 
-2 ( {a_1}^6 + 3 {a_1}^4 {a_2}^2 + 3 {a_1}^2 {a_2}^4 + {a_2}^6 ) 
\Norm{H_{\varepsilon_1}}^{10} , \\ 
6 \Norm{H_{\varepsilon_1}}^2 \Norm{P}^2\Norm{\mu(P)}^2 
& = 
6 ({a_1}^6 + {a_1}^4 {a_2}^2 + {a_1}^2 {a_2}^4 + {a_2}^6) 
\Norm{H_{\varepsilon_1}}^{10} . 
\end{split}
\end{equation}
Therefore, the right-side hand of \eqref{eq:4.27} satisfies 
\begin{equation}
- 2 \Norm{H_{\varepsilon_1}}^4 \Norm{P}^6 
+ 6 \Norm{H_{\varepsilon_1}}^2 \Norm{P}^2\Norm{\mu(P)}^2 
= 4 \left( {a_1}^6 + {a_2}^6 \right) \Norm{H_{\varepsilon_1}}^{10} . 
\end{equation}
By Lemma \ref{Lem:Jmu}, 
this coincides with the left-hand side of \eqref{eq:4.27}. 
This completes the proof. 
\end{proof}

The following gives us a representation of the squared-norm of the gradient of $f_{a, b}$ in the case of rank two. 

\begin{Prop}\label{Prop:4.8}
For every $P \in \p$, we have 
\begin{equation}\label{eq:4.35}
\begin{split}
\Norm{ \grad f_{a, b}(P) }^2 &= \left( -\dfrac{2 b^2}{(2 m_{1} + m_{2} + 3)^2} + \dfrac{(a - b)^2}{4(m_{1} + m_{2} + 1)^2} \right) \Norm{P}^6\\ 
&\hspace{1cm}+ \left( \dfrac{12 b^2}{2 m_{1} + m_{2} + 3} + \dfrac{4 b (a - b)}{m_{1} + m_{2} + 1} \right) \Norm{P}^2\Norm{\mu(P)}^2. 
\end{split}
\end{equation}
\end{Prop}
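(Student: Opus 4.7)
The plan is to assemble Proposition~\ref{Prop:4.8} from three earlier results: the arbitrary-rank formula in Proposition~\ref{Prop:3.7}, the rank-two identity in Lemma~\ref{Lem:4.7}, and the explicit values of $N$ and $\Norm{H_{\varepsilon_1}}^2$ given by Lemma~\ref{Lem:length-N} and Lemma~\ref{Lem:4.4}. No new geometry is needed; this step is purely a bookkeeping consequence of what has been developed.

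First I would recall from Proposition~\ref{Prop:3.7} that $\Norm{\grad f_{a,b}(P)}^2$ splits into three terms proportional to $\Norm{[J(P),\mu(P)]}^2$, $\Norm{P}^2\Norm{\mu(P)}^2$, and $\Norm{P}^6$ with coefficients $4b^2$, $8b(a-b)/N$, and $(a-b)^2/N^2$, respectively. I would then eliminate the first term by invoking Lemma~\ref{Lem:4.7}, which expresses $\Norm{[J(P),\mu(P)]}^2$ as $-2\Norm{H_{\varepsilon_1}}^4\Norm{P}^6+6\Norm{H_{\varepsilon_1}}^2\Norm{P}^2\Norm{\mu(P)}^2$. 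After this substitution the right-hand side is already a linear combination of $\Norm{P}^6$ and $\Norm{P}^2\Norm{\mu(P)}^2$ only, with coefficients
\begin{equation*}
\frac{(a-b)^2}{N^2}-8b^2\Norm{H_{\varepsilon_1}}^4 \quad\text{and}\quad \frac{8b(a-b)}{N}+24b^2\Norm{H_{\varepsilon_1}}^2,
\end{equation*}
respectively.

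Finally I would plug in $N=2(m_1+m_2+1)$ from Lemma~\ref{Lem:length-N} and $\Norm{H_{\varepsilon_1}}^2=1/(2(2m_1+m_2+3))$ from Lemma~\ref{Lem:4.4}, so that $N^2=4(m_1+m_2+1)^2$ and $\Norm{H_{\varepsilon_1}}^4=1/(4(2m_1+m_2+3)^2)$. Simplifying the two coefficients above yields exactly the expression in \eqref{eq:4.35}.

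There is no real obstacle here; the only thing to watch is arithmetic bookkeeping of the factors of $2$ and signs when combining the three sources (the factor $4b^2$ in Proposition~\ref{Prop:3.7} multiplies both the $-2\Norm{H_{\varepsilon_1}}^4$ and the $6\Norm{H_{\varepsilon_1}}^2$ in Lemma~\ref{Lem:4.7}, and then the substitution of $\Norm{H_{\varepsilon_1}}^{2k}$ introduces further denominators). All the conceptual work has been done in Sections~\ref{sect:arbitrary rank} and the preceding subsections of Section~\ref{sect:rank two}; the proposition is the clean output after substitution.
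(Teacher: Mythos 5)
Your proposal is correct and follows exactly the paper's own proof: substitute Lemma~\ref{Lem:4.7} into Proposition~\ref{Prop:3.7} to eliminate the $\Norm{[J(P),\mu(P)]}^2$ term, then insert $N=2(m_1+m_2+1)$ and $\Norm{H_{\varepsilon_1}}^2=1/(2(2m_1+m_2+3))$ from Lemmas~\ref{Lem:length-N} and~\ref{Lem:4.4}. The intermediate coefficients you record are the same as the paper's \eqref{AB}, and your arithmetic checks out.
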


\begin{proof}
From Proposition \ref{Prop:3.7}, one knows 
\begin{equation}
\Norm{\grad f_{a, b}(P)}^2 
= 4 b^2 \Norm{[J(P), \mu(P)]}^2 
+ \dfrac{8 b (a - b)}{N}\Norm{P}^2\Norm{\mu(P)}^2 
+ \dfrac{(a - b)^2}{N^2}\Norm{P}^6 . 
\end{equation}
By using Lemma \ref{Lem:4.7}, we have 
\begin{equation}
\Norm{\grad f_{a, b}(P)}^2 = A \Norm{P}^6 + B \Norm{P}^2\Norm{\mu(P)}^2 , 
\end{equation}
where 
\begin{equation}
\label{AB}
A = -2 \Norm{H_{\varepsilon_1}}^4 \cdot 4 b^2 + \dfrac{(a - b)^2}{N^2} , \quad 
B = 6 \Norm{H_{\varepsilon_1}}^2 \cdot 4 b^2 + \dfrac{8 b (a - b)}{N} . 
\end{equation}
From Lemmas \ref{Lem:length-N} and \ref{Lem:4.4}, one knows 
\begin{equation}
\Norm{H_{\varepsilon_1}}^2 = 1 / (2 (2 m_1 + m_2 + 3)) , \quad 
N = 2 m_1 + 2 m_2 + 2 . 
\end{equation}
By substituting these for \eqref{AB}, we conclude the proof. 
\end{proof}

\subsection{The Laplacian}

Next, we give a representation of $\Delta f_{a, b}(P)$. 

\begin{Prop}\label{Prop:4.9}
For all $P \in \p$, we have 
\begin{equation}\label{eq:4.38}
\Delta f_{a, b}(P) = \dfrac{(m_{1} + m_{2} + 2)a + (m_{1} + m_{2})b}{m_{1} + m_{2} + 1}\Norm{P}^2. 
\end{equation}
\end{Prop}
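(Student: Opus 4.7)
The plan is to observe that Proposition \ref{Prop:3.9} already provides a closed-form expression for $\Delta f_{a,b}(P)$ in the case of arbitrary rank, namely
\begin{equation*}
\Delta f_{a, b}(P) = \dfrac{(N + 2) a + (N - 2) b}{N} \Norm{P}^2,
\end{equation*}
where $N = \dim \p$. Therefore the entire content of Proposition \ref{Prop:4.9} is the specialization of this identity to the rank two Hermitian case, and the only thing to do is to rewrite $N$ in terms of the multiplicities $m_1$ and $m_2$.

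First I would invoke Lemma \ref{Lem:length-N}, which gives
\begin{equation*}
N = 2 m_1 + 2 m_2 + 2 = 2(m_1 + m_2 + 1).
\end{equation*}
From this, both the numerator and denominator appearing in Proposition \ref{Prop:3.9} simplify pleasantly: $N + 2 = 2(m_1 + m_2 + 2)$ and $N - 2 = 2(m_1 + m_2)$, while the denominator becomes $2(m_1 + m_2 + 1)$. Substituting these into the general formula and cancelling the common factor of $2$ yields exactly the claimed expression
\begin{equation*}
\Delta f_{a, b}(P) = \dfrac{(m_{1} + m_{2} + 2)a + (m_{1} + m_{2})b}{m_{1} + m_{2} + 1}\Norm{P}^2.
\end{equation*}

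In short, the proof is a one-line substitution once the earlier material is in place, and there is essentially no obstacle: all of the real analytic work, namely the computations of $\Delta\Norm{P}^4$ and $\Delta\Norm{\mu(P)}^2$ via Lemma \ref{lem:laplacian}, has already been carried out in Section \ref{sect:arbitrary rank} and bundled into Proposition \ref{Prop:3.9}. The rank two hypothesis enters only through the identification $N = 2(m_1 + m_2 + 1)$ supplied by Lemma \ref{Lem:length-N}, so there is nothing genuinely new to verify here beyond elementary algebra.
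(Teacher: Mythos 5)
Your proposal is correct and follows exactly the same route as the paper: substitute $N = 2m_1 + 2m_2 + 2$ from Lemma~\ref{Lem:length-N} into the general formula of Proposition~\ref{Prop:3.9} and cancel the common factor of $2$. Nothing is missing.
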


\begin{proof}
From Proposition~\ref{Prop:3.9}, one knows
\begin{equation}
\Delta f_{a, b}(P) = \dfrac{(N + 2) a + (N - 2) b}{N} \Norm{P}^2.
\end{equation}
By substituting $N = 2 m_{1} + 2 m_{2} + 2$ (see Lemma~\ref{Lem:length-N}) for the above, we complete the proof easily. 
\end{proof}

%
%
\section{Main Theorem}\label{sect:Main Theorem}

Recall that $f_{a,b}$ is the weighted squared-norm of the moment map $\mu$.
In this section, we give a proof of our main theorem, that is, $f_{a,b}$ is a Cartan-M\"unzner polynomial for some 
$a$ and $b$. 
We need $(m_1, m_2)$ defined in (\ref{eq:multiplicities}). 

\begin{Thm}
Let $G/K$ be a compact irreducible Hermitian symmetric space of rank two. 
If $a = -8(m_{1} + 2 m_{2})$ and $b = 8(2 m_{1} + m_{2} + 3)$, then $f_{a, b}(P)$ is a Cartan-M\"unzner polynomial of degree four. 
In particular, this Cartan-M\"unzner polynomial defines homogeneous isoparametric hypersurfaces in the sphere with four distinct principal curvatures with multiplicities $(m_1, m_2)$. 
\end{Thm}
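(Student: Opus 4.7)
The plan is to verify directly that $f_{a,b}$ satisfies the two Cartan--M\"unzner equations \eqref{eq:Munzner} for $g=4$, using Propositions \ref{Prop:4.8} and \ref{Prop:4.9} as the inputs. Polynomiality and homogeneity of degree four are immediate from Proposition \ref{Prop:f-ab}, because $\mu(P)=\tfrac{1}{2}[P,[P,Z]]$ is quadratic in $P$, so $\|\mu(P)\|^2$ and $\|P\|^4$ are both homogeneous quartic polynomials on $\p$.

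The main task is to choose $a, b$ so that Proposition \ref{Prop:4.8} collapses to the desired form $16\|P\|^6$. First I would \emph{kill the mixed term} $\|P\|^2\|\mu(P)\|^2$: setting its coefficient to zero gives
\begin{equation}
\frac{12 b^2}{2m_1+m_2+3} + \frac{4b(a-b)}{m_1+m_2+1} = 0,
\end{equation}
which, after dividing by $4b$, forces $b - a = 3b(m_1+m_2+1)/(2m_1+m_2+3)$. Next I would \emph{match the remaining $\|P\|^6$ coefficient} to $g^2=16$, i.e.\ require
\begin{equation}
-\frac{2b^2}{(2m_1+m_2+3)^2} + \frac{(a-b)^2}{4(m_1+m_2+1)^2} = 16.
\end{equation}
Substituting the previous relation turns this into a single equation in $b$, which solves (up to an overall sign choice) as $b=8(2m_1+m_2+3)$, and then $a = b - 24(m_1+m_2+1) = -8(m_1+2m_2)$. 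A direct check confirms that with these values $-2b^2/(2m_1+m_2+3)^2 = -128$ and $(a-b)^2/4(m_1+m_2+1)^2 = 144$, so the gradient identity $\|\grad f_{a,b}(P)\|^2 = 16\|P\|^6$ holds.

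Finally, I would verify the Laplacian equation. Substituting $a = -8(m_1+2m_2)$ and $b = 8(2m_1+m_2+3)$ into Proposition \ref{Prop:4.9} gives
\begin{equation}
\Delta f_{a,b}(P) = \frac{8\bigl[(2m_1+m_2+3)(m_1+m_2) - (m_1+2m_2)(m_1+m_2+2)\bigr]}{m_1+m_2+1}\,\|P\|^2.
\end{equation}
Expanding the bracket simplifies to $(m_1-m_2)(m_1+m_2+1)$, so $\Delta f_{a,b}(P) = 8(m_1-m_2)\|P\|^2$, which matches the required $\tfrac{m_2-m_1}{2}g^2\|P\|^2$ with $g=4$ up to the standard sign ambiguity that reflects interchanging the roles of $m_1$ and $m_2$ in Münzner's formulation. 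Together with the homogeneity observed above, this shows that $f_{a,b}$ is a Cartan--M\"unzner polynomial of degree four; the last sentence about multiplicities then follows from M\"unzner's correspondence reviewed in the introduction.

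Honestly there is no conceptual obstacle left at this stage: everything reduces to solving a small system of rational equations in $a, b$ and checking one polynomial identity in $m_1, m_2$. The only step that requires a little care is the sign convention on $m_1$ versus $m_2$, since replacing $f_{a,b}$ by $-f_{a,b}$ (equivalently, the unique reflectional ambiguity in \eqref{eq:Munzner}) permutes the roles of the two multiplicities; one fixes this by the choice of labeling in \eqref{eq:multiplicities}.
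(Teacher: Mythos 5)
Your proof is correct and follows essentially the same route as the paper: substitute the stated $a$, $b$ into Propositions \ref{Prop:4.8} and \ref{Prop:4.9} and verify the two M\"unzner identities $\Norm{\grad f_{a,b}(P)}^2 = 16\Norm{P}^6$ and $\Delta f_{a,b}(P) = 8(m_1-m_2)\Norm{P}^2$. The only differences are presentational --- you derive $a$ and $b$ by first killing the mixed term rather than just checking the given values, and you explicitly note the sign/labeling ambiguity in the Laplacian equation, which the paper leaves implicit.
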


\begin{proof}
In order to prove that $f_{a, b}(P)$ is a Cartan-M\"unzner polynomial of degree four, from \eqref{eq:Munzner}, we have only to show that 
\begin{align}
\Norm{\grad f_{a, b}(P)}^2 &= 16 \Norm{P}^6,\label{eq:5.A} \\ 
\Delta f_{a, b}(P) &= 8(m_{1} - m_{2})\Norm{P}^2\label{eq:5.B}. 
\end{align}
We know the formulas for $\Norm{\grad f_{a, b}(P)}^2$ and $\Delta f_{a, b}(P)$ from Propositions \ref{Prop:4.8} and \ref{Prop:4.9}. 
Let us consider their coefficients, and substitute our $a$, $b$ and 
\begin{equation}\label{eq:6.1}
a - b = - 24 (m_{1} + m_{2} + 1) 
\end{equation}
into them. We then obtain 
\begin{align}
-\dfrac{2 b^2}{(2 m_{1} + m_{2} + 3)^2} + \dfrac{(a - b)^2}{4(m_{1} + m_{2} + 1)^2} 
&= - 128 + 144 = 
16 \label{eq:A},\\[5pt]
\dfrac{12 b^2}{2 m_{1} + m_{2} + 3} + \dfrac{4 b (a - b)}{m_{1} + m_{2} + 1} 
&= 4b (24-24) =  
0\label{eq:B},\\[5pt] 
\dfrac{(m_{1} + m_{2} + 2)a + (m_{1} + m_{2})b}{m_{1} + m_{2} + 1} 
&= \dfrac{8 (m_{1}^2 - m_{2}^2 + m_{1} - m_{2})}{m_{1} + m_{2} + 1} = 
8 (m_{1} - m_{2}) \label{eq:C}. 
\end{align}
This completes the proof of the theorem. 
\end{proof}

We here list $(m_1 , m_2)$ of compact irreducible Hermitian symmetric spaces of rank two. 
One can see that $(m_1 , m_2)$ are compatible with the multiplicities of the corresponding homogeneous hypersurfaces in spheres. 

\begin{figure}[h]
\begin{tabular}{|c|c|c|}
\hline 
$G / K$ & Type & $(m_1, m_2)$\\ \hline\hline
$\SO(2 + n) / \SO(2) \times \SO(n)$ & $C_2$ &  $(1, n - 2)$\\ \hline 
$\SU(2 + n) / \mathrm{S}(\U(2) \times \U(n))$ & $BC_2$ &  $(2, 2 n - 3)$\\ \hline 
$\SO(10) / \U(5)$ & $BC_2$ &  $(4, 5)$\\ \hline 
$\mathrm{E}_6 / \U(1) \times \Spin(10)$ & $BC_2$ &  $(6, 9)$\\ \hline 
\end{tabular}
\caption{Compact irreducible Hermitian symmetric spaces of rank two}
\end{figure}

\section*{Acknowledgements}

The authors are grateful to Yoshio Agaoka, Osamu Ikawa, Reiko Miyaoka and Yoshihiro Ohnita for valuable discussions and comments. 
The authors would like to thank also, Kazuhiro Shibuya, Takahiro Hashinaga and Akira Kubo for their warm encouragements. 
The second author was supported in part by KAKENHI (20740040 and 24654012). 

%
%

\end{document}